\definecolor{myurlcolor}{rgb}{0,0,0.4}
\definecolor{mycitecolor}{rgb}{0,0.5,0}
\definecolor{myrefcolor}{rgb}{0.5,0,0}
\theoremstyle{plain}
\newtheorem{theorem}{Theorem}[section]
\newtheorem{corollary}[theorem]{Corollary}
\newtheorem{proposition}[theorem]{Proposition}
\theoremstyle{remark}
\newtheorem{remark}[theorem]{Remark}
\theoremstyle{definition}
\newtheorem{definition}[theorem]{Definition}
\newtheorem{example}[theorem]{Example}
\title{Classifying Linear Matrix Inequalities via Abstract Operator Systems}
\author{Martin Berger}
\author{Tom Drescher}
\author{Tim Netzer}
\address{Universit\"at Innsbruck \\ Faculty of Mathematics, Computer Science and Physics\\ Department of Mathematics}
\date{\today}                                          
\begin{document}
\maketitle
\begin{abstract}
We systematically study how properties of abstract operator systems help classifying linear matrix inequality definitions of sets. Our main focus is on polyhedral cones,  the $3$-dimensional Lorentz cone, where we can completely describe all defining linear matrix inequalities, and on the cone of positive semidefinite matrices. Here we use results on isometries between matrix algebras to describe linear matrix inequality definitions of relatively small size. We conversely use the theory of operator systems to characterize special such isometries.
\end{abstract}
\section{Introduction}
A  spectrahedron is, by definition, the solution set of a linear matrix inequality. We will restrict ourselves to convex cones throughout this paper, so spectrahedra have the form $$\{a\in\mathbb R^d\mid a_1M_1+\cdots +a_dM_d\geqslant 0\},$$ where $M_1,\ldots, M_d\in{\rm Her}_n(\mathbb C)$ are Hermitian matrices, and $\geqslant 0$ means that the matrix is positive semidefinite. All polyhedra are spectrahedra, but there are many more.  Spectrahedra are precisely the feasible sets of semidefinite programming, and thus deciding whether a set is a spectrahedron is a relevant problem for optimization. It is also a surprisingly hard problem, with deep connections to the theory of determinantal representations of polynomials and convex algebraic geometry. See \cite{blek, nepl}  for many more details on this exciting topic.

Here we ask a slightly different question: Given a spectrahedron, by how many different linear matrix inequalities can it be defined? It has first been observed in \cite{hkm2} that this question relates strongly  to the so-called free (=noncommutative) extension of the spectahedron. These free extensions are, on the other hand, almost the same as abstract operator systems, a well-studied concept in operator algebra (see \cite{pau}, for example).

In this paper we  systematically study how different properties of abstract operator systems lead to results on linear matrix inequality representations of sets. We prove several new results on abstract operator systems, and also apply existing ones to gain new insights into spectrahedral representations. Our main focus is on polyhedral cones, the circular cone,  and on cones of positive semidefinite matrices.

This paper is structured as follows. In \Cref{sec:prel} we explain all concepts  that we will use,  \Cref{sec:main}  contains our main results. In \Cref{ssec:class} we first explain how properties of operator systems shed light on linear matrix inequality definitions of sets in general.  \Cref{ssec:gen} contains results on the largest operator system over a general convex cone. These results are then strengthened for special classes of cones. In particular, \Cref{ssec:poly}  deals with polyhedral cones, and \Cref{ssec:disk} is devoted to the Lorentz cone in $\mathbb R^3.$ Here we can completely  classify all defining linear matrix inequalities. By proving a result on the largest operator system over the Lorentz cone, we also show that there exist an infinite sequence of weaker and weaker such definitions. Finally, \Cref{ssec:psd} deals with the cone of positive semidefinite (psd) matrices. Since a linear matrix inequality definition of this cone is the same as a unital $*$-isometry between matrix algebras, we can use existing results to obtain insights into operator systems and linear matrix inequality definitions of the psd cone. We then use the operator systems approach to characterize special $*$-isometries between matrix algebras.

\subsection*{Acknowledgement}We thank Mario Kummer for suggesting a very elegant alternative proof of our \Cref{prop:deriv} below, and for allowing us to include it here.
 Martin Berger greatfully acknowledges financial suppport by the Austrian Academy of Sciences (\"OAW) through a DOC Scholarship.
\section{Preliminaries}\label{sec:prel}
In this section we collect all important notions and facts that we will use for our main results. 

\subsection{Abstract Operator Systems}
Throughout $\mathcal V$ denotes a finite-dimensional $\mathbb C$-vector space with an involution $*$.  For each $s\geqslant 1$, the space $${\rm Mat}_{s}(\mathbb C)\otimes \mathcal V={\rm Mat}_s(\mathcal V)$$ inherits a canonical involution (entrywise $*$ and transposition), and the $\mathbb R$-vector space   of Hermitian elements (i.e.\ fixed points of the involution) is denoted by  ${\rm Her}_s(\mathcal V)$. All spaces are equipped with the Euclidean norm/topology. A convex cone is called {\it proper}, if it is closed, has nonempty interior, and does not contain a full line.

\begin{definition}
An {\it abstract operator system} $\mathcal C=\left(\mathcal C_s\right)_{s\geqslant 1}$  on $\mathcal{V}$ consists of a proper convex cone $\mathcal{C}_s\subseteq {\rm Her}_s(\mathcal{V})$ for each $s\geqslant 1$,  such that
$$A\in \mathcal{C}_s, V\in {\rm Mat}_{s,t}(\mathbb C) \ \Rightarrow\ V^*AV\in \mathcal{C}_t.$$ Often, a fixed interior point $u\in\mathcal C_1$ is also considered part of the structure.  We call $\mathcal C_s$ the {\it cone at level $s$} of the abstract operator system $\mathcal C$.
\end{definition}

By the Choi--Effros Theorem (\cite{choi}, see  also \cite{pau}), for every abstract operator system $\mathcal C$ there exists a Hilbert space $\mathcal H$ and a $*$-linear map $\varphi\colon \mathcal V\rightarrow \mathbb B(\mathcal H)$ with  $\varphi(u)={\rm id}_{\mathcal H}$, such that for all $s\geqslant 1$ and $A\in {\rm Her}_s(\mathcal V)$,
$$
A\in \mathcal{C}_s \:\Leftrightarrow\: ({\rm id}\otimes\varphi)(A)\geqslant 0,
$$
where $\geqslant 0$ denotes positive semidefiniteness (psd).
On the right-hand side, we use the canonical identification
$$
 {\rm Mat}_s(\mathbb C)\otimes \mathbb B(\mathcal H)={\rm Mat}_s(\mathbb B(\mathcal H))= \mathbb B(\mathcal H^s)
$$
to define positivity of the operator. Such a mapping $\varphi$ is called a {\it concrete realization} or just {\it realization} of the operator system $\mathcal C$.

\subsection{Free Spectrahedra and Numerical Ranges} Special classes of abstract operator systems are of particular interest to us.
\begin{definition}
($i$) An abstract operator system is called  a {\it free spectrahedron,} if it has a realization with $\dim{\mathcal{H}}<\infty$. 

($ii$) We call an operator system  a {\it free numerical range,} if it is finitely generated.  That means it contains finitely many elements, for which it is the smallest one containing them.
\end{definition}

\begin{remark}
Since we assume  $\mathcal V$ to be  finite-dimensional,  we may  assume $\mathcal V=\mathbb C^d$ with the canonical involution, and thus  $${\rm Mat}_s(\mathcal V)={\rm Mat}_s(\mathbb C)^d, \quad {\rm Her}_s(\mathcal V)={\rm Her}_s(\mathbb C)^d.$$ Then a realization of an abstract operator system just consists of self-adjoint operators $M_1,\ldots, M_d\in\mathbb B(\mathcal H)$ with $u_1M_1+\cdots +u_dM_d={\rm id}_{\mathcal H}$, such that $$
(A_1,\ldots, A_d)\in \mathcal{C}_s \:\Leftrightarrow\: A_1\otimes M_1+\cdots + A_d\otimes M_d\geqslant 0.
$$
Finite-dimensional realizability  means that the $M_i$ can be taken as Hermitian matrices. In this case we denote the operator system/free spectrahedron by  $$\mathcal S(M_1,\ldots,M_d)=\left(\mathcal S_s(M_1,\ldots, M_d)\right)_{s\geqslant 1}.$$ In particular, we have $$\mathcal S_1(M_1,\ldots, M_d)=\{a\in\mathbb R^d\mid a_1M_1+\cdots +a_dM_d\geqslant 0\},$$ which is the definition of a classical spectrahedron, the feasible set of a semidefinite program.
\end{remark}

\begin{remark} Abstract operator systems are closed under block-diagonal sums: $$A\in\mathcal C_s, B\in\mathcal C_t \ \Rightarrow \ A\oplus B\in \mathcal C_{s+t}.$$ So every free numerical range is generated by a single element $M\in\mathcal C_r.$ This implies that  $$\mathcal C_s=\left\{ \sum_{j=1}^{n} V_j^*MV_j\mid n\geqslant 1, V_j\in {\rm Mat}_{r,s}(\mathbb C)\right\}$$ holds for all $s\geqslant 1.$ In particular, if $M=(M_1,\ldots, M_d)\in {\rm Her}_r(\mathbb C)^d$ and $u_1M_1+\cdots +u_dM_d=I_r$ holds for some $u\in\mathbb R^d$, then $$\{a\in\mathcal C_1\mid u^ta=1\}=\left\{ \sum_j(v_j^*M_1v_j,\ldots, v_j^*M_dv_j)\mid v_j\in\mathbb C^r, \sum_j v_j^*v_j=1\right\}$$ is the convex hull of the so-called {\it joint numerical range} of $M_1,\ldots, M_d$. 
If a free numerical range is generated by $(M_1,\ldots, M_d)\in {\rm Her}_r(\mathbb C)^d,$ we denote it by $$\mathcal W(M_1,\ldots,M_d)=\left(\mathcal W_s(M_1,\ldots, M_d)\right)_{s\geqslant 1}.$$
\end{remark}

The following result is the separation method from \cite{effros}, formulated in our context:
\begin{theorem}\label{thm:sep} Every abstract operator system on $\mathcal V$ is an intersection  of free specatrahedra.
\end{theorem}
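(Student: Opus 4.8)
The plan is to establish a matricial Hahn--Banach separation: for every level $s$ and every $A_0\in{\rm Her}_s(\mathcal V)\setminus\mathcal C_s$ I will produce a free spectrahedron $\mathcal S$ with $\mathcal C\subseteq\mathcal S$ (meaning $\mathcal C_t\subseteq\mathcal S_t$ for all $t$) but $A_0\notin\mathcal S_s$. Granting this, $\mathcal C=\bigcap\mathcal S$ with the intersection running over all such $\mathcal S$: the inclusion ``$\subseteq$'' is trivial, and any $A_0$ lying outside $\mathcal C_s$ for some $s$ is excluded from the intersection by the free spectrahedron separating it. So the whole statement reduces to the separation assertion.

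To build $\mathcal S$, fix $A_0\in{\rm Her}_s(\mathcal V)\setminus\mathcal C_s$. Since $\mathcal C_s$ is a closed convex cone, ordinary Hahn--Banach separation in the finite-dimensional real space ${\rm Her}_s(\mathcal V)$ gives a real-linear functional $L$ with $L\geqslant 0$ on $\mathcal C_s$ and $L(A_0)<0$. Writing ${\rm Her}_s(\mathcal V)={\rm Her}_s(\mathbb C)^d$ and using the (real, nondegenerate) trace pairing, $L(X_1,\ldots,X_d)=\sum_{i=1}^d{\rm tr}(X_iY_i)$ for suitable $Y_1,\ldots,Y_d\in{\rm Her}_s(\mathbb C)$. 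The candidate realization will be $M_i:=\overline{Y_i}$; I claim $\mathcal C\subseteq\mathcal S(M_1,\ldots,M_d)$ while $A_0\notin\mathcal S_s(M_1,\ldots,M_d)$.

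The crux is a single computation. Given $A=(A_1,\ldots,A_d)\in\mathcal C_t$ and a vector $\xi\in\mathbb C^t\otimes\mathbb C^s$, reshape $\xi$ into a matrix $Z\in{\rm Mat}_{t,s}(\mathbb C)$; unwinding the tensor identification shows
\[
\langle\xi,\ (A_1\otimes M_1+\cdots+A_d\otimes M_d)\,\xi\rangle \;=\; L\bigl(Z^*A_1Z,\ldots,Z^*A_dZ\bigr).
\]
Now the operator-system axiom enters, and this is the only place it is used: $(Z^*A_1Z,\ldots,Z^*A_dZ)=Z^*AZ\in\mathcal C_s$ because $\mathcal C_t$ is closed under compressions by $Z\in{\rm Mat}_{t,s}(\mathbb C)$, so the right-hand side is $\geqslant 0$. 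As $\xi$ ranges over $\mathbb C^t\otimes\mathbb C^s$ the matrix $Z$ ranges over all of ${\rm Mat}_{t,s}(\mathbb C)$, hence $A_1\otimes M_1+\cdots+A_d\otimes M_d\geqslant 0$, i.e.\ $A\in\mathcal S_t(M_1,\ldots,M_d)$; this gives $\mathcal C\subseteq\mathcal S(M_1,\ldots,M_d)$. Taking $t=s$, $Z=I_s$ and $A=A_0$ yields $\langle\xi,(\sum_i(A_0)_i\otimes M_i)\xi\rangle=L(A_0)<0$, so $A_0\notin\mathcal S_s(M_1,\ldots,M_d)$.

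The remaining work is cosmetic but must be done with care: the $M_i$ just constructed need not form a \emph{proper, unital} realization as required of a free spectrahedron. One first passes to the quotient by $\bigcap_i\ker M_i$ (which leaves $\mathcal S(M_1,\ldots,M_d)$ unchanged), then conjugates the $M_i$ by $\bigl(\sum_i u_iM_i\bigr)^{-1/2}$ — this sum is positive definite because the distinguished point $u$ lies in the interior of $\mathcal C_1\subseteq\mathcal S_1(M_1,\ldots,M_d)$, and a short argument using that $u$ is interior rules out a kernel — to arrange $\sum_i u_iM_i=I$. Finally one intersects with a fixed proper unital free spectrahedron $\mathcal D\supseteq\mathcal C$, which exists because the proper cone $\mathcal C_1$ is contained in a simplicial cone and the (diagonal, polyhedral) free spectrahedron of a simplicial cone is proper at every level; an intersection of free spectrahedra is again one, realized by the block-diagonal sum of the realizations. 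I expect exactly this normalization bookkeeping, together with pinning down the precise reshaping identity in the display, to be the only genuine obstacle; the conceptual content — that one separating functional at level $s$ governs all levels through the compression axiom — is already contained in the previous paragraph.
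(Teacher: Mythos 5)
Your proof is correct, and it is essentially a reconstruction of the Effros--Winkler matricial Hahn--Banach separation theorem, which is exactly the source (\cite{effros}) the paper cites for this result without giving a proof. The reshaping identity converting a level-$s$ separating functional into a free spectrahedron via compressions, together with the normalization bookkeeping at the end, matches the cited approach.
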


\subsection{Smallest and Largest Operator Systems}
Let $C\subseteq\mathbb R^d$ be a proper convex  cone. We are interested in operator systems $\mathcal{C}=\left(\mathcal C_s\right)_{s\geqslant 1}$ with $\mathcal C_1=C$. We call this an {\it operator system over} $C$. There is a largest such operator system $\mathcal C(C)^{\rm lrg}$, defined by 
$$
\mathcal C_s(C)^{\rm lrg}:=\left\{ (A_1,\ldots, A_d)\in{\rm Her}_s(\mathbb C)^d \mid \forall v\in \mathbb C^s\colon (v^*A_1v,\ldots, v^*A_dv)\in C\right\}.
$$
There is also a smallest such operator system $\mathcal C(C)^{\rm sml}$, defined by 
$$
\mathcal C_s(C)^{\rm sml}:=\left\{\sum_{j=1}^n  P_j \otimes c_j \mid n\geqslant 1,c_j\in C, P_j\in{\rm Her}_s(\mathbb C), P_j\geqslant 0\right\}.
$$ 
These systems are largest/smallest in the sense that for any operator systems $\left(  \mathcal C_s\right)_{s\geqslant 1}, \left(  \mathcal D_s\right)_{s\geqslant 1}$ with $ \mathcal C_1\subseteq C\subseteq \mathcal D_1$, we have $ \mathcal C_s(C)^{\rm sml}\subseteq \mathcal D_s$ and $\mathcal C_s\subseteq  \mathcal C_s(C)^{\rm lrg}$ for all $s\geqslant 1.$ For details and proofs of these statements see \cite{pauto,fritz}. 

\subsection{Duality} Given an abstract operator system $\mathcal C=(\mathcal C_s)_{s\geqslant 1}$ on $\mathbb C^d,$ its {\it free dual} $\mathcal C^{\vee_{\rm fr}}$ is defined by 
$$\mathcal C^{\vee_{\rm fr}}_s:=\left\{ (A_1,\ldots, A_d)\in {\rm Her}_s(\mathbb C)^d\mid \forall s\geqslant 1, B\in\mathcal C_s\colon  \sum_{i=1}^d B_i\otimes A_i\geqslant 0 \right\}.$$ The following result  summarizes the most important properties of the free dual. Proofs can for example be found in  \cite{bene, hkm}.
\begin{theorem}\label{thm:dual}
Let $C\subseteq \mathbb R^d$ be a proper convex cone, $\mathcal C=(\mathcal C_s)_{s\geqslant 1}$ an abstract operator system and $\mathcal S(M_1,\ldots, M_d)$ a free spectrahedron on $\mathbb C^d$.
  \begin{enumerate}
 \item[$(i)$] $\mathcal{C}^{\vee_{\rm fr}}$ is an abstract operator system.
 \item[$(ii)$] $(\mathcal C^{\vee_{\rm fr}})^{\vee_{\rm fr}}=\mathcal C$.
\item [$(iii)$] $(\mathcal C(C)^{\rm sml})^{\vee_{\rm fr}}=\mathcal C(C^\vee)^{\rm lrg}$ and $(\mathcal C(C)^{\rm lrg})^{\vee_{\rm fr}}=\mathcal C(C^\vee)^{\rm sml},$ where $C^\vee$ denotes the dual cone of $C\subseteq \mathbb R^d$.
\item[$(iv)$] $\mathcal S(M_1,\ldots, M_d)^{\vee_{\rm fr}}=\mathcal W(M_1,\ldots, M_d)$ and $\mathcal W(M_1,\ldots, M_d)^{\vee_{\rm fr}}=\mathcal S(M_1,\ldots, M_d).$
 In particular, $\mathcal S(M_1,\ldots, M_d)^{\vee_{\rm fr}}\subseteq \mathcal S(M_1,\ldots, M_d)$ if and only if $\sum_{i=1}^d M_i\otimes M_i\geqslant 0$.
 \end{enumerate}
\end{theorem}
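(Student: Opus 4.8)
The plan is to prove the four items in the order $(i)$, the ``$\rm sml$'' half of $(iii)$ together with the first identity of $(iv)$, then $(ii)$, and finally to bootstrap the remaining assertions and the ``in particular'' from $(ii)$ by duality. For $(i)$, closedness and convexity of each $\mathcal C_s^{\vee_{\rm fr}}$ are immediate, it being an intersection of the closed convex cones $\{A\mid \sum_i B_i\otimes A_i\geqslant 0\}$ over all $t$ and all $B\in\mathcal C_t$. The compression axiom follows from the identity $\sum_i B_i\otimes(V^*A_iV)=(I\otimes V)^*(\sum_i B_i\otimes A_i)(I\otimes V)$ for $V\in{\rm Mat}_{s,t}(\mathbb C)$, with an identity block of the appropriate size. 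Line-freeness follows by compressing to level $1$: if $\pm A\in\mathcal C_s^{\vee_{\rm fr}}$, then $\pm(v^*A_1v,\ldots,v^*A_dv)$ both lie in the ordinary dual cone $(\mathcal C_1)^\vee$ of the proper cone $\mathcal C_1$, hence vanish for all $v$, so $A=0$. The only nonformal point is nonempty interior: choosing $w$ in the interior of $(\mathcal C_1)^\vee$, the tuple $(w_1I_s,\ldots,w_dI_s)$ lies in $\mathcal C_s^{\vee_{\rm fr}}$ because $v^*(\sum_i w_iB_i)v=\langle(v^*B_1v,\ldots,v^*B_dv),w\rangle\geqslant 0$ for all $B\in\mathcal C_t$ and all $v$, and a standard compactness estimate (using that $\langle b,w\rangle$ is bounded below on $\mathcal C_1$ intersected with the unit sphere) upgrades this to an interior point.

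For the computational identities I would just chase definitions. For the ``$\rm sml$'' part of $(iii)$: writing a generic $B\in\mathcal C(C)^{\rm sml}_t$ as $\sum_j P_j\otimes c_j$ gives $\sum_i B_i\otimes A_i=\sum_j P_j\otimes(\sum_i (c_j)_iA_i)$, which is psd for all such $B$ exactly when $\sum_i c_iA_i\geqslant 0$ for every $c\in C$, i.e.\ when $(v^*A_1v,\ldots,v^*A_dv)\in C^\vee$ for all $v$, i.e.\ $A\in\mathcal C(C^\vee)^{\rm lrg}_s$. Similarly, for a generic $B=\sum_jV_j^*MV_j\in\mathcal W_t(M)$ one has $\sum_i B_i\otimes A_i=\sum_j(V_j\otimes I)^*(\sum_i M_i\otimes A_i)(V_j\otimes I)$, which (take $t=r$, $V=I_r$ for necessity; conjugation and summation for sufficiency) is psd for all $B$ iff $\sum_i M_i\otimes A_i\geqslant 0$, i.e.\ $A\in\mathcal S_s(M)$ after swapping tensor legs. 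This gives $\mathcal W(M)^{\vee_{\rm fr}}=\mathcal S(M)$.

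The substantive step is $(ii)$. The inclusion $\mathcal C\subseteq(\mathcal C^{\vee_{\rm fr}})^{\vee_{\rm fr}}$ is formal from the symmetry of the defining inequality. For the reverse, suppose $A\notin\mathcal C_s$. By \Cref{thm:sep} we may write $\mathcal C=\bigcap_\lambda\mathcal S(N^\lambda)$ with finite tuples $N^\lambda\in{\rm Her}_{r_\lambda}(\mathbb C)^d$, and choose $\lambda$ with $A\notin\mathcal S_s(N^\lambda)$, that is $\sum_iN^\lambda_i\otimes A_i\not\geqslant 0$. Since $\mathcal C\subseteq\mathcal S(N^\lambda)$, every $B\in\mathcal C_t$ satisfies $\sum_i B_i\otimes N^\lambda_i\geqslant 0$, so $N^\lambda\in\mathcal C^{\vee_{\rm fr}}_{r_\lambda}$; then $\sum_iN^\lambda_i\otimes A_i\not\geqslant 0$ certifies $A\notin(\mathcal C^{\vee_{\rm fr}})^{\vee_{\rm fr}}_s$. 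The main obstacle of the whole proof is precisely this appeal to the matrix-convex separation theorem \Cref{thm:sep} (equivalently, the Effros--Winkler Hahn--Banach theorem); the rest is bookkeeping with tensor products.

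Finally I would deduce the remaining claims from $(ii)$. Applying the ``$\rm sml$'' identity of $(iii)$ to the proper cone $C^\vee$ gives $(\mathcal C(C^\vee)^{\rm sml})^{\vee_{\rm fr}}=\mathcal C(C^{\vee\vee})^{\rm lrg}=\mathcal C(C)^{\rm lrg}$; dualizing this and using $(ii)$ yields $(\mathcal C(C)^{\rm lrg})^{\vee_{\rm fr}}=\mathcal C(C^\vee)^{\rm sml}$. Likewise, dualizing $\mathcal W(M)^{\vee_{\rm fr}}=\mathcal S(M)$ and using $(ii)$ gives $\mathcal S(M)^{\vee_{\rm fr}}=\mathcal W(M)$. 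For the ``in particular'': the generator $M$ lies in $\mathcal W_r(M)$, and by the Remark $\mathcal W(M)$ is contained in every abstract operator system that has $M$ at level $r$ (compressions plus block sums). Since $M\in\mathcal S_r(M)$ is by definition equivalent to $\sum_i M_i\otimes M_i\geqslant 0$, it follows that $\sum_i M_i\otimes M_i\geqslant 0$ forces $\mathcal S(M)^{\vee_{\rm fr}}=\mathcal W(M)\subseteq\mathcal S(M)$, while conversely $\mathcal S(M)^{\vee_{\rm fr}}=\mathcal W(M)\subseteq\mathcal S(M)$ forces $M\in\mathcal S_r(M)$, i.e.\ $\sum_i M_i\otimes M_i\geqslant 0$.
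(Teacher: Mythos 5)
The paper itself does not prove \Cref{thm:dual}; it refers to the literature (the references cited after the theorem statement), so there is no in-text proof to compare against. Your argument is, however, essentially correct and self-contained modulo the Effros--Winkler separation theorem (\Cref{thm:sep}), which you correctly identify as the only deep ingredient: the formal inclusion $\mathcal C\subseteq(\mathcal C^{\vee_{\rm fr}})^{\vee_{\rm fr}}$ plus separation gives $(ii)$, and the computations for the ``sml'' half of $(iii)$ and for $\mathcal W(M)^{\vee_{\rm fr}}=\mathcal S(M)$ in $(iv)$ are just the tensor-leg bookkeeping you describe (taking $t=1$, resp.\ $t=r$ and $V=I_r$, for the forward implications). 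The order of deductions (prove one identity in $(iii)$ and $(iv)$ directly, then bootstrap the other via $(ii)$) is clean and avoids redundancy. Your ``in particular'' argument via $M\in\mathcal S_r(M)$ is also correct.

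The one place you are vague is the nonempty-interior claim in $(i)$. The compactness estimate you gesture at does not straightforwardly bound $\|\sum_i B_i\otimes E_i\|$ by the least eigenvalue of $(\sum_i w_iB_i)\otimes I_s$, since $B$ can be rank-deficient while having large norm; the clean way to see the point $(w_1I_s,\ldots,w_dI_s)$ is interior is not the crude eigenvalue estimate. A shorter and airtight route, which does not presuppose $(iii)$: observe directly from the definitions that $\mathcal C(\mathcal C_1^\vee)^{\rm sml}\subseteq\mathcal C^{\vee_{\rm fr}}$ (if $A=\sum_jP_j\otimes a_j$ with $P_j\geqslant 0$, $a_j\in\mathcal C_1^\vee$, then for $B\in\mathcal C_t$ one has $\sum_i B_i\otimes A_i=\sum_j\bigl(\sum_i(a_j)_iB_i\bigr)\otimes P_j\geqslant 0$, using that $v^*Bv\in\mathcal C_1$ for all $v$), and then invoke that the smallest operator system over a proper cone has proper cones at every level, a fact already recorded in the preliminaries. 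Apart from this, the argument is sound.
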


\subsection{Containment}
For two abstract operator systems $\mathcal C=\left(\mathcal C_s\right)_{s\geqslant 1}$ and $\mathcal D=\left(\mathcal D_s\right)_{s\geqslant 1}$ on $\mathcal V,$ we write $\mathcal C\subseteq \mathcal D$ if $\mathcal C_s\subseteq\mathcal D_s$ holds for every $s\geqslant 1.$ Note that $\mathcal C_1\subseteq\mathcal D_1$ does in general not imply $\mathcal C\subseteq\mathcal D$. In the following result from \cite{hkm2}, equivalence of ($i$) and ($ii$) follows from \Cref{thm:dual}, equivalence of ($ii$) and ($iii$) is clear. 

\begin{theorem}\label{thm:cont}
Let $(M_1,\ldots, M_d)\in {\rm Her}_{r_1}(\mathbb C)^d$ and $(N_1,\ldots, N_d)\in{\rm Her}_{r_2}(\mathbb C)^d$ be such that $\mathcal S(M_1,\ldots, M_d)$ and $\mathcal S(N_1,\ldots, N_d)$ are free spectrahedra.Then the following are equivalent:
\begin{enumerate}
\item[$(i)$]  $\mathcal S(M_1,\ldots, M_d)\subseteq \mathcal S(N_1,\ldots, N_d)$
\item[$(ii)$]  $\mathcal W(N_1,\ldots, N_d)\subseteq \mathcal W(M_1,\ldots, M_d)$ 
\item[$(iii)$]  There exist $V_j\in {\rm Mat}_{r_1,r_2}(\mathbb C)$ with  $$\sum_j V_j^*M_iV_j=N_i$$ for all $i=1,\ldots, d.$
\end{enumerate}  
\end{theorem}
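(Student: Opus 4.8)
The plan is to follow the route sketched just above the statement: derive $(i)\Leftrightarrow(ii)$ from the duality theorem and $(ii)\Leftrightarrow(iii)$ from the generator description of free numerical ranges recorded in the remarks. No fresh separation or concrete-realization argument is needed beyond what is already packaged into \Cref{thm:dual}.

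For $(i)\Leftrightarrow(ii)$ the one thing I would first make explicit is that the free dual reverses inclusions: if $\mathcal C\subseteq\mathcal D$ are abstract operator systems on $\mathbb C^d$, then the condition defining $\mathcal D^{\vee_{\rm fr}}_s$ ranges over the elements $B$ of the larger cones $\mathcal D_t$, hence is more restrictive than the one defining $\mathcal C^{\vee_{\rm fr}}_s$, so $\mathcal D^{\vee_{\rm fr}}\subseteq\mathcal C^{\vee_{\rm fr}}$. Together with $(\mathcal C^{\vee_{\rm fr}})^{\vee_{\rm fr}}=\mathcal C$ from \Cref{thm:dual}$(ii)$, this shows that taking free duals is an inclusion-reversing involution on operator systems on $\mathbb C^d$, and by \Cref{thm:dual}$(iv)$ it interchanges $\mathcal S(M_1,\ldots,M_d)$ with $\mathcal W(M_1,\ldots,M_d)$ (and likewise for $N$). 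Hence $(i)$ is equivalent to $\mathcal S(N_1,\ldots,N_d)^{\vee_{\rm fr}}\subseteq\mathcal S(M_1,\ldots,M_d)^{\vee_{\rm fr}}$, i.e.\ to $\mathcal W(N_1,\ldots,N_d)\subseteq\mathcal W(M_1,\ldots,M_d)$, which is $(ii)$.

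For $(ii)\Leftrightarrow(iii)$ I would invoke the explicit level-$s$ description of a singly generated free numerical range from the remark: since $\mathcal W(M_1,\ldots,M_d)$ is generated by the tuple $(M_1,\ldots,M_d)\in{\rm Her}_{r_1}(\mathbb C)^d$, for every $s$ its cone at level $s$ consists exactly of the tuples $\bigl(\sum_j V_j^*M_1V_j,\ldots,\sum_j V_j^*M_dV_j\bigr)$ with $V_j\in{\rm Mat}_{r_1,s}(\mathbb C)$, and moreover $\mathcal W(M_1,\ldots,M_d)$ is the smallest operator system with $(M_1,\ldots,M_d)$ in its cone at level $r_1$. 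If $(ii)$ holds, then $(N_1,\ldots,N_d)\in\mathcal W_{r_2}(N_1,\ldots,N_d)\subseteq\mathcal W_{r_2}(M_1,\ldots,M_d)$, so reading off the description at $s=r_2$ yields matrices $V_j\in{\rm Mat}_{r_1,r_2}(\mathbb C)$ with $\sum_j V_j^*M_iV_j=N_i$ for all $i$, which is $(iii)$. Conversely, $(iii)$ says precisely that $(N_1,\ldots,N_d)\in\mathcal W_{r_2}(M_1,\ldots,M_d)$; as $\mathcal W(M_1,\ldots,M_d)$ is an operator system containing this tuple and $\mathcal W(N_1,\ldots,N_d)$ is the smallest such, $\mathcal W(N_1,\ldots,N_d)\subseteq\mathcal W(M_1,\ldots,M_d)$, which is $(ii)$.

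I do not expect a real obstacle: the substantive content --- the Choi--Effros representation underlying \Cref{thm:dual}, the bipolar identity $(\mathcal C^{\vee_{\rm fr}})^{\vee_{\rm fr}}=\mathcal C$, and the fact that the smallest operator system over a finite generating tuple has the stated closed form at every level --- has already been cited. The only steps needing a line of their own are that the free dual reverses inclusions (immediate from the definition) and that the generator description applies at the specific level $r_2$ (a Carath\'eodory/block-diagonal argument, already recorded in the remarks). After that the three equivalences are pure substitution.
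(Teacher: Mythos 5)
Your proposal is correct and follows exactly the route the paper indicates: deriving $(i)\Leftrightarrow(ii)$ from the inclusion-reversing involution given by the free dual together with \Cref{thm:dual}$(ii)$ and $(iv)$, and deriving $(ii)\Leftrightarrow(iii)$ from the explicit level-$s$ description of a singly generated free numerical range recorded in the remark. The paper itself only sketches this (referring to \cite{hkm2} and calling $(ii)\Leftrightarrow(iii)$ ``clear''), and your writeup fills in precisely the intended details.
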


\subsection{Free Spectrahedral Shadows}
If $\pi\colon\mathbb R^{d+e}\to\mathbb R^d$ is the canonical projection, and $C\subseteq \mathbb R^{d+e}$ a (classical) spectrahedral cone, then $\pi(C)$ is called a {\it spectrahedral shadow.} We can extend this notion to free spectrahedra as follows.
For each $s\geqslant 1$, consider the projection 
\begin{align*}\pi_s\colon {\rm Her}_{s}(\mathbb C)^{d+e}&\to   {\rm Her}_{s}(\mathbb C)^{d} \\ 
(A_1,\ldots, A_d,B_1,\ldots, B_e)&\mapsto (A_1,\ldots, A_d).
\end{align*} If $\mathcal C=\mathcal S(M_1,\ldots, M_d,N_1,\ldots, N_e)$ is a free spectrahedron on $\mathbb C^{d+e}$, then $$\pi_{\rm fr}(\mathcal S(M_1,\ldots, M_d,N_1,\ldots, N_e):=\left(\pi_s\left(\mathcal S_s(M_1,\ldots, M_d,N_1,\ldots, N_e)\right)\right)_{s\geqslant 1}$$
is is  called a {\it free spectrahedral shadow} (or {\it spectrahedrop} in  \cite{hkm}). 
The following result was proven in \cite{hkm}, extending the same result for classical spectrahedral shadows to the free setup.
\begin{theorem}\label{thm:dualshadow} The free dual of a free spectrahedral shadow is a free spectrahedral shadow. In particular, 
each free numerical range is a free spectrahedral shadow.
\end{theorem}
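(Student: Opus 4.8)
The plan is to reduce everything to one substantive fact, namely the ``in particular'' clause: every free numerical range is a free spectrahedral shadow. Granting this, the general statement follows formally. Write the given free spectrahedral shadow as $\mathcal K=\pi_{\rm fr}\bigl(\mathcal S(M_1,\dots,M_d,N_1,\dots,N_e)\bigr)$ with all $M_i,N_j\in{\rm Her}_r(\mathbb C)$. Unwinding the definitions of $\pi_{\rm fr}$ and of the free dual, one checks directly that $(C_1,\dots,C_d)\in\mathcal K^{\vee_{\rm fr}}_s$ if and only if $\sum_i A_i\otimes C_i\geqslant 0$ for all $t\geqslant 1$ and all $(A,B)\in\mathcal S_t(M_1,\dots,M_d,N_1,\dots,N_e)$; and this says precisely that $(C_1,\dots,C_d,0,\dots,0)$ lies in $\mathcal S(M_1,\dots,N_e)^{\vee_{\rm fr}}$, which by \Cref{thm:dual}$(iv)$ is the free numerical range $\mathcal W(M_1,\dots,M_d,N_1,\dots,N_e)$. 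Thus $\mathcal K^{\vee_{\rm fr}}$ is the preimage of $\mathcal W(M_1,\dots,N_e)$ under the fixed level-wise linear inclusion $(C)\mapsto(C,0)$; since the class of free spectrahedral shadows is closed under linear preimages (and, more generally, under linear images and finite intersections), $\mathcal K^{\vee_{\rm fr}}$ is again a free spectrahedral shadow. The ``in particular'' clause is then the case $e=0$: one has $\mathcal W(M_1,\dots,M_d)=\mathcal S(M_1,\dots,M_d)^{\vee_{\rm fr}}$ by \Cref{thm:dual}$(iv)$, and $\mathcal S(M_1,\dots,M_d)$ is itself a free spectrahedral shadow.

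For the substantive fact, fix $M_1,\dots,M_d\in{\rm Her}_r(\mathbb C)$ and recall that $\mathcal W_s(M_1,\dots,M_d)$ consists of the tuples $\bigl(\sum_j V_j^*M_1V_j,\dots,\sum_j V_j^*M_dV_j\bigr)$ with $V_j\in{\rm Mat}_{r,s}(\mathbb C)$; by the Kraus description of completely positive maps this is exactly the set of $\bigl(\Phi(M_1),\dots,\Phi(M_d)\bigr)$ with $\Phi\colon{\rm Mat}_r(\mathbb C)\to{\rm Mat}_s(\mathbb C)$ completely positive. By Choi's theorem $\Phi$ is completely positive if and only if its Choi matrix $G_\Phi\in{\rm Her}_{rs}(\mathbb C)$ is positive semidefinite, and each $\Phi(M_i)$ is recovered from $G_\Phi$ by a fixed $\mathbb R$-linear formula (a suitable partial trace). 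The key point is that fixing a Hermitian basis $F_1,\dots,F_{r^2}$ of ${\rm Her}_r(\mathbb C)$ gives an $\mathbb R$-linear identification ${\rm Her}_{rs}(\mathbb C)\cong{\rm Her}_s(\mathbb C)^{r^2}$, $G=\sum_{\alpha=1}^{r^2}F_\alpha\otimes Y_\alpha\leftrightarrow(Y_1,\dots,Y_{r^2})$, in which the number $r^2$ of auxiliary $s\times s$ Hermitian ``coordinates'' does not depend on $s$. Under it, ``$G\geqslant 0$'' becomes the free spectrahedral condition $(Y_1,\dots,Y_{r^2})\in\mathcal S_s(F_1,\dots,F_{r^2})$, where $\mathcal S(F_1,\dots,F_{r^2})$ is a genuine free spectrahedron (its level-$s$ cone is linearly isomorphic to the positive semidefinite cone of ${\rm Her}_{rs}(\mathbb C)$, proper at every level), while $\Phi(M_i)=\sum_\alpha c_{i\alpha}Y_\alpha$ with fixed real scalars $c_{i\alpha}$ depending linearly on $M_i$. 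Hence $\mathcal W(M_1,\dots,M_d)$ is the image of the free spectrahedron $\mathcal S(F_1,\dots,F_{r^2})$ under the fixed linear map $(Y_\alpha)_\alpha\mapsto\bigl(\sum_\alpha c_{i\alpha}Y_\alpha\bigr)_i$, and is therefore a free spectrahedral shadow.

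I expect the main obstacle to be not this Choi parametrization but the bookkeeping needed to keep every intermediate object a bona fide free spectrahedron or shadow in the sense used here, with proper cones at all levels: realizing a linear image or an affine slice as $\pi_{\rm fr}$ of a genuinely proper free spectrahedron, and verifying that the relevant cones (for instance $\mathcal W_s$ and $\mathcal K^{\vee_{\rm fr}}_s$) are closed. These are precisely the technical points that already arise in the classical statement that the polar dual of a spectrahedral shadow is a spectrahedral shadow, and I would dispatch them by the same devices (eliminating equality constraints, passing to quotients, invoking conic duality at a Slater point), referring to \cite{hkm} for the free-setting details. The genuinely new ingredient over the classical case, and the heart of the argument, is the identification above: it turns the existential description of $\mathcal W$ into a linear image of a single fixed free spectrahedron with an $s$-independent number of auxiliary variables.
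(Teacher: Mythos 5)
The paper does not prove this theorem itself; it cites \cite{hkm} for it. So I can only assess your argument on its own merits, and I think it is essentially correct, with the Choi--Kraus parametrization of $\mathcal W(M_1,\dots,M_d)$ being exactly the right key idea (and, to my knowledge, also the idea underlying the proof in \cite{hkm}). Two remarks to tighten it up.

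First, the step where you pass from ``$\mathcal W$ is a linear image of the free spectrahedron $\mathcal S(F_1,\dots,F_{r^2})$'' to ``$\mathcal W$ is a free spectrahedral shadow'' deserves one more sentence, because the paper's definition of a free spectrahedral shadow only allows coordinate projections of a \emph{proper} free spectrahedron, and the graph trick produces a degenerate cone. The clean fix is to observe that the linear map $T\colon (Y_\alpha)\mapsto\bigl(\sum_\alpha c_{i\alpha}Y_\alpha\bigr)_i$ is automatically surjective: if $\sum_i a_iM_i=0$ for $a\neq 0$ then $\pm a\in\mathcal S_1(M)$, contradicting properness of the free spectrahedron, so $M_1,\dots,M_d$ are $\mathbb R$-linearly independent in ${\rm Her}_r(\mathbb C)$, hence $d\leqslant r^2$ and the $d\times r^2$ matrix $(c_{i\alpha})$ has full rank $d$. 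One may then change the basis $(F_\alpha)$ so that $T$ becomes the coordinate projection to the first $d$ coordinates; the new tuple is still a basis of ${\rm Her}_r(\mathbb C)$, so the new free spectrahedron is still proper, and $\mathcal W=\pi_{\rm fr}(\mathcal S(F''_1,\dots,F''_{r^2}))$ literally in the paper's sense.

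Second, your reduction of the general statement to the ``in particular'' clause via the preimage $\iota^{-1}$ is formally correct, but the closure of free spectrahedral shadows under preimages is not innocuous: the obvious candidate, drop from the pencil $(F''_1,\dots,F''_{r^2})$ the $e$ matrices indexed $d+1,\dots,d+e$, produces a sub-pencil whose spectrahedron may have empty interior (the subspace spanned by the remaining $F''_\alpha$ need not meet the interior of the psd cone), hence is not a free spectrahedron in the paper's sense. This is repairable by eliminating the resulting hidden equality constraints, which cannot involve the $C$-variables alone since $\mathcal K^{\vee_{\rm fr}}_1=\mathcal K_1^\vee$ is full-dimensional, but it is exactly the kind of fuss you flag at the end and is not purely a matter of ``referring to \cite{hkm},'' since \cite{hkm} is the very reference the paper uses for the whole theorem. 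Spelling that elimination out, or alternatively running the Choi argument directly on $\mathcal K^{\vee_{\rm fr}}$ rather than detouring through $\mathcal W(M,N)$, would make the proof self-contained.
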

\section{Results}\label{sec:main}

We are now prepared for the main results of our paper. In \Cref{ssec:class} we first explain a general relation between the smallest/largest operator system, and linear matrix inequality definitions of a cone. We take this as a motivation to study the largest operator system over a quite general cone in \Cref{ssec:gen}, showing that its interior is always covered by free spectrahedral shadows over the cone. In \Cref{ssec:poly} we briefly describe and review what we know about linear matrix inequality definitions of polyhedra. \Cref{ssec:disk} is devoted to the circular cone in $\mathbb R^3$, which has some surprising properties. They allow us to completely classify all of its linear matrix inequality definitions. We also show that it admits a whole sequence of weaker and weaker linear matrix inequality definitions. Finally, \Cref{ssec:psd} deals with the cone of positive semidefinite matrices. We show that small linear matrix inequality definitions always give rise to one of the two standard ones (identity and transposition), and then characterize when this happens also for larger sizes.

\subsection{Operator Systems and Classification of Linear Matrix Inequalities}\label{ssec:class}
We start with a general explanation   how the theory of abstract operator systems, and in particular the smallest and largest operator system, relate to the problem of classifying matrix tuples $(M_1,\ldots, M_d)$ that realize a  given proper convex cone $C\subseteq\mathbb R^d$ as a spectrahedron: $$C=\mathcal S_1(M_1,\ldots, M_d).$$ We also say that $M_1,\ldots, M_d$ {\it define $C$ by a linear matrix inequality} in this case.

First assume $$\mathcal C(C)^{\rm lrg}=\mathcal S(N_1,\ldots, N_d)$$ is a free spectrahedron.  This means that $(N_1,\ldots, N_d)$ provide  the {\it weakest} linear matrix inequality definition of $C$, i.e.\ the $N_i$  arise from every other linear matrix inequality for $C$ in the sense of  \Cref{thm:cont} ($iii$).  Similarly, if $\mathcal C(C)^{\rm sml}=\mathcal S(M_1,\ldots, M_d)$ is a free spectrahedron, then $(M_1,\ldots, M_d)$ provide the  {\it strongest} definition, from which every other can be obtained.  

The main result from \cite{fritz} partially classifies in which cases this holds.
Indeed, $\mathcal C(C)^{\rm lrg}$ is a free spectrahedron if and only if $C$ is polyhedral. In this case, $\mathcal C(C)^{\rm sml}$ is a free spectrahedron if and only if $C$ is a simplex cone. Note that the weakest definition of a polyhedral cone is by a diagonal linear matrix inequality, containing the linear inequalities for the facets of $C$ on the diagonal. In the case of a simplex cone, this is the strongest definition at the same time.

So now assume that  $\mathcal C(C)^{\rm sml}$ is not a free spectrahedron. So whenever  $C=\mathcal S_1(N_1,\ldots, N_d)$ we have  $$\mathcal C(C)^{\rm sml}\subsetneq \mathcal S(N_1,\ldots, N_d).$$  By  \Cref{thm:sep} we can find matrices $(M_1,\ldots, M_d)$  such that $$\mathcal C(C)^{\rm sml}\subsetneq \mathcal S(M_1,\ldots, M_d)\subsetneq \mathcal S(N_1,\ldots, N_d).$$  So the new matrices are not sums of compressions of the old ones,  only vice versa. We can iterate the process and obtain a sequence of stronger and stronger  linear matrix inequality definitions of $C$, of which none is a sums of compressions of the previous one. 

On the other hand, if $\mathcal C(C)^{\rm lrg}$ is not a free spectrahedron (i.e.\ if $C$ is not polyhedral), the situation might be more complicated. Although we don't have an example, it might happen that every free spectrahedron over $C$ is contained in one of finitely many maximal ones.  In that case, $C$ would possess finitely many weak linear matrix inequality definitions, in the sense that every other gives rise to one of those by sums of compressions.

However, if this is not the case,  we obtain  a sequence of weaker and weaker linear matrix inequality definitions for $C$, such that none gives rise to the previous one by sums of compressions. This is for example the case for the circular cone, as we will  show in \Cref{ssec:disk}.
 
\subsection{General Cones}\label{ssec:gen} We first prove a general result  about the largest operator system over a general spectrahedron/spectrahedral shadow. All notions such as union, interior...\ are understood levelwise.

\begin{theorem}\label{thm:gencon}Let  $C\subseteq\mathbb R^d$ be a proper convex cone.

$(i)$ If $C^\vee$ is a  spectrahedron, then the union over all free numerical ranges  over $C$  covers the interior of $\mathcal C(C)^{\rm lrg}$.

$(ii)$ If $C$ is a spectrahedral shadow, then the union over all free spectrahedral shadows  over $C$ covers the interior of $\mathcal C(C)^{\rm lrg}$.
\end{theorem}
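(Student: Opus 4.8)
The plan is to handle both parts with a single construction, based on one elementary observation: for $A\in{\rm Her}_s(\mathbb C)^d$ one has $A\in\mathcal C_s(C)^{\rm lrg}$ \emph{if and only if} $\mathcal W_1(A)\subseteq C$. Indeed, $\mathcal W_1(A)$ is the cone generated by the vectors $(v^*A_1v,\ldots,v^*A_dv)$ with $v\in\mathbb C^s$, and since $C$ is a convex cone, all finite sums of such vectors lie in $C$ exactly when each single one does, which is precisely the defining condition of $\mathcal C_s(C)^{\rm lrg}$. Consequently, if $\mathcal E=(\mathcal E_s)_{s\geqslant 1}$ is \emph{any} operator system over $C$, i.e.\ $\mathcal E_1=C$, and $A\in\mathcal C_s(C)^{\rm lrg}$, then the levelwise Minkowski sum $\mathcal D:=\mathcal E+\mathcal W(A)$ is again closed under compressions, has $\mathcal D_1=C+\mathcal W_1(A)=C$ (as $\mathcal W_1(A)\subseteq C$ and $0\in\mathcal W_1(A)$), and contains $A=0+A$ at level $s$. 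So the whole task reduces to producing a suitable $\mathcal E$ inside the desired class and checking that adjoining $\mathcal W(A)$ does not leave that class.

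For part $(i)$, since $C^\vee$ is a spectrahedron we may write $C^\vee=\mathcal S_1(P_1,\ldots,P_d)$. By \Cref{thm:dual}$(iv)$ we have $\mathcal S(P_1,\ldots,P_d)^{\vee_{\rm fr}}=\mathcal W(P_1,\ldots,P_d)$; taking level-$1$ cones and using that the level-$1$ cone of a free dual is the ordinary dual cone, we get $\mathcal W_1(P_1,\ldots,P_d)=(C^\vee)^\vee=C$, because $C$ is proper, hence closed and convex. Thus $\mathcal E:=\mathcal W(P_1,\ldots,P_d)$ is a free numerical range over $C$, and $\mathcal D=\mathcal E+\mathcal W(A)=\mathcal W(P_1\oplus A_1,\ldots,P_d\oplus A_d)$ is again a free numerical range, being generated by the single tuple $P\oplus A$. (Alternatively, $A\in\mathcal W_s(P\oplus A)$ follows directly from \Cref{thm:cont} because $\mathcal S(P\oplus A)=\mathcal S(P)\cap\mathcal S(A)\subseteq\mathcal S(A)$.)

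For part $(ii)$, if $C^\vee$ happens to be a spectrahedron we are already done by $(i)$, since free numerical ranges are free spectrahedral shadows (\Cref{thm:dualshadow}). In general, write $C=\pi(\widetilde C)$ with $\widetilde C=\mathcal S_1(Q_1,\ldots,Q_{d+e})$ a proper spectrahedral cone and $\pi\colon\mathbb R^{d+e}\to\mathbb R^d$ the coordinate projection; then $\mathcal E:=\pi_{\rm fr}(\mathcal S(Q_1,\ldots,Q_{d+e}))$ is a free spectrahedral shadow with $\mathcal E_1=\pi(\widetilde C)=C$. Now $\mathcal D=\mathcal E+\mathcal W(A)$ is again a free spectrahedral shadow: $\mathcal W(A)$ is one by \Cref{thm:dualshadow}, and the class of (free) spectrahedral shadows is stable under products and linear images, hence under Minkowski sums.

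The step that genuinely needs care — and, I believe, the reason the statement is phrased for the interior of $\mathcal C(C)^{\rm lrg}$ rather than for all of it — is verifying that the constructed $\mathcal D$ is an \emph{honest} abstract operator system, i.e.\ that every $\mathcal D_s$ is closed, full-dimensional and line-free, since projections of closed cones and Minkowski sums of closed cones need not be closed. Full-dimensionality and line-freeness are cheap: the summand $\mathcal E$ already has full-dimensional cones (the ``diagonal'' copy of ${\rm int}(C)$ sits inside $\mathcal E_s$), and $\mathcal D_s\subseteq\mathcal C_s(C)^{\rm lrg}$, which is pointed. Closedness, together with the point that an interior $A$ lands in the constructed cone and not merely in its closure, is where the interior hypothesis is used. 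I expect this bookkeeping, rather than the algebra, to be the main obstacle; the algebraic heart of the argument is just the equivalence $A\in\mathcal C_s(C)^{\rm lrg}\Leftrightarrow\mathcal W_1(A)\subseteq C$ combined with the stability of free spectrahedral shadows under sums.
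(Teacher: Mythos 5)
Your argument takes a genuinely different and more concrete route than the paper's. The paper proves $(i)$ top--down: it invokes \Cref{thm:sep} to write $\mathcal C(C^\vee)^{\rm sml}=\bigcap_i\mathcal C^{(i)}$ as an intersection of free spectrahedra over $C^\vee$, takes free duals using \Cref{thm:dual}, and observes that the double free dual of the resulting directed union of free numerical ranges is its levelwise closure, which is $\mathcal C(C)^{\rm lrg}$; density of a convex cone then yields coverage of the interior. You instead go bottom--up: given $A\in\mathcal C_s(C)^{\rm lrg}$, you exhibit an explicit free numerical range $\mathcal W(P\oplus A)$ containing it, where $C^\vee=\mathcal S_1(P)$. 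Your key equivalence $A\in\mathcal C_s(C)^{\rm lrg}\Leftrightarrow \mathcal W_1(A)\subseteq C$ is sound and is in fact essentially \Cref{thm:genspeccontain} in disguise. The constructive approach is arguably more informative, since it names a specific free numerical range over $C$ through $A$, rather than arguing by density.

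Where you hedge is the one place the argument needs real care, and I think you have slightly misdiagnosed what the interior hypothesis is for. The cleanest way to see that your $\mathcal D=\mathcal W(P\oplus A)$ is an honest abstract operator system is not to chase closedness of Minkowski sums directly, but to route it through duality: since $A\in\mathcal C_s(C)^{\rm lrg}$, one has $v^*Av\in C$ for all $v$, hence $u\cdot A\geqslant 0$ for every $u\in C^\vee$, i.e.\ $C^\vee\subseteq\mathcal S_1(A)$; therefore $\mathcal S_1(P\oplus A)=\mathcal S_1(P)\cap\mathcal S_1(A)=C^\vee$. The role of the interior hypothesis on $A$ is to guarantee a unit: for $u\in\mathrm{int}(C^\vee)$ one has $u\cdot A\succ 0$ (if $v^*Av=0$ for some $v\neq 0$ then $A$ could be perturbed out of $\mathcal C(C)^{\rm lrg}$, contradicting interiority), so after conjugating by $(u\cdot(P\oplus A))^{-1/2}$ the tuple $P\oplus A$ is a legitimate realization and $\mathcal S(P\oplus A)$ is a free spectrahedron over $C^\vee$. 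Then $\mathcal W(P\oplus A)=\mathcal S(P\oplus A)^{\vee_{\rm fr}}$ is automatically an abstract operator system by \Cref{thm:dual}, with level-one cone $(C^\vee)^\vee=C$, and it contains $A=V^*(P\oplus A)V$ for $V=\begin{pmatrix}0\\ I_s\end{pmatrix}$. (Your parenthetical appeal to \Cref{thm:cont} via $\mathcal S(P\oplus A)\subseteq\mathcal S(A)$ is shakier, since $\mathcal S(A)$ need not itself be a free spectrahedron.) For $(ii)$ the same skeleton works, but you should note that stability of free spectrahedral shadows under Minkowski sums and the membership of $\mathcal W(A)$ in that class (via \Cref{thm:dualshadow}, applied to the free spectrahedron $\mathcal S(P\oplus A)$ rather than to the possibly degenerate $\mathcal S(A)$) are the facts doing the work, not general closedness of projected cones. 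With those adjustments your proof is correct and, if anything, more hands-on than the paper's.
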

\begin{proof}
($i$) From \Cref{thm:sep} we know that \begin{equation}\label{eq:int}\mathcal C(C^{\vee})^{\rm sml}=\bigcap_{i\in I} \mathcal C^{(i)}\end{equation} is the intersection over all free spectrahedra $\mathcal C^{(i)}$ over  $C^{\vee}$.   By duality  we  obtain $$\mathcal C(C)^{\rm lrg}=\left(\mathcal C(C^{\vee})^{\rm sml}\right)^{\vee_{\rm fr}}=\left(\bigcap_{i\in I} \mathcal C^{(i)}\right)^{\vee_{\rm fr}}=\left(\bigcup_{i\in I} {\mathcal C^{(i)}}^{\vee_{\rm fr}}\right)^{\vee_{\rm fr}\vee_{\rm fr}}.$$ By \Cref{thm:dual}, each ${\mathcal C^{(i)}}^{\vee_{\rm fr}}$ is a free numerical range. Since the free numerical ranges over $C$ form a poset, the free double dual of the union  is just its (levelwise) closure, which proves the claim. 

($ii$) is proven similarly, but letting the intersection in (\ref{eq:int}) run over all free spectrahedral shadows over $C$.
 \end{proof}

The next result is a characterization  of all spectrahedra that contain a given cone in terms of a largest operator system.

\begin{proposition}
    \label{thm:genspeccontain}
    Let $C\subseteq \mathbb{R}^d$ be a proper convex cone and let $N_1,\dots,N_d\in\mathrm{Her}_s(\mathbb{C})$ be Hermitian matrices. Then $C\subseteq\mathcal{S}_1(N_1,\dots,N_d)$ if and only if $(N_1,\dots,N_d)\in\mathcal{C}(C^{\vee})^{\mathrm{lrg}}$.
\end{proposition}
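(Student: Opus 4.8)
The plan is to unwind both sides of the claimed equivalence directly from the definitions and then invoke the duality dictionary of \Cref{thm:dual}. First I would recall that $C \subseteq \mathcal{S}_1(N_1,\dots,N_d)$ means precisely that every $c \in C$ satisfies $c_1 N_1 + \cdots + c_d N_d \geqslant 0$. Next, I would observe that since $C$ is closed under nonnegative scaling and the condition $\sum_i c_i N_i \geqslant 0$ is preserved under nonnegative combinations, this is equivalent to demanding $\sum_i \gamma_i N_i \geqslant 0$ for all $\gamma$ in the closed convex cone generated by $C$, i.e.\ for all $\gamma \in C$ again (as $C$ is already a proper convex cone). By bipolarity $C = (C^\vee)^\vee$, so $c \in C$ iff $\langle c, \delta \rangle \geqslant 0$ for all $\delta \in C^\vee$; but I do not actually need to replace the index set — the cleanest route is to note that $C \subseteq \mathcal{S}_1(N_1,\dots,N_d)$ says exactly that $(N_1,\dots,N_d)$ lies in $\mathcal{C}_s(C^\vee)^{\mathrm{lrg}}$ once we spell out the latter.

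Concretely, the second step is to expand $\mathcal{C}_s(C^\vee)^{\mathrm{lrg}}$ using its defining formula: $(N_1,\dots,N_d) \in \mathcal{C}_s(C^\vee)^{\mathrm{lrg}}$ iff for every $v \in \mathbb{C}^s$ the vector $(v^* N_1 v, \dots, v^* N_d v)$ lies in $C^\vee$. By definition of the dual cone, this holds iff for every $v \in \mathbb{C}^s$ and every $c \in C$ we have $\sum_{i=1}^d c_i\, (v^* N_i v) \geqslant 0$, equivalently $v^*\!\left(\sum_i c_i N_i\right)\! v \geqslant 0$. Since this must hold for all $v$, it is equivalent to $\sum_i c_i N_i \geqslant 0$ for all $c \in C$, which is exactly $C \subseteq \mathcal{S}_1(N_1,\dots,N_d)$. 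So the whole argument is a short chain of ``unfold the definition'' steps, the key hinge being the symmetry between ``$(v^*N_i v)_i \in C^\vee$ for all $v$'' and ``$\sum_i c_i N_i \geqslant 0$ for all $c \in C$'', which is just Fubini on the two universal quantifiers together with the characterization of positive semidefiniteness of a Hermitian matrix via $v \mapsto v^* X v \geqslant 0$.

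Alternatively, one could phrase this more structurally: $C \subseteq \mathcal{S}_1(N_1,\dots,N_d)$ is the level-$1$ instance of the containment $\mathcal{C}(C)^{\mathrm{sml}} \subseteq \mathcal{S}(N_1,\dots,N_d)$ — indeed at level $1$ these two are equivalent because $\mathcal{C}_1(C)^{\mathrm{sml}} = C$ — and then by \Cref{thm:dual}$(iii)$ and the dual pairing one would pass to $\mathcal{W}(N_1,\dots,N_d) \subseteq \mathcal{C}(C^\vee)^{\mathrm{lrg}}$, reading off membership of the generating tuple. However, the subtlety there is that $\mathcal{C}_1(C)^{\mathrm{sml}} \subseteq \mathcal{S}_1(N)$ must be upgraded to the full levelwise containment before duality applies, and that upgrade is precisely what fails in general (cf.\ the remark after \Cref{thm:cont}); it happens to work here only because $\mathcal{C}(C)^{\mathrm{sml}}$ is the \emph{smallest} operator system over $C$, so level-$1$ containment in \emph{any} operator system does force containment at all levels. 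I therefore expect the main (minor) obstacle to be resisting the temptation to invoke heavy duality machinery and instead carrying out the elementary two-quantifier swap, being careful that the definition of $\mathcal{C}^{\mathrm{lrg}}$ quantifies over complex vectors $v \in \mathbb{C}^s$ and that positive semidefiniteness of a Hermitian matrix is tested by exactly those vectors.
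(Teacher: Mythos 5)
Your proof is correct and is essentially the same chain of equivalences as the paper's: unwind $C\subseteq\mathcal S_1(N_1,\dots,N_d)$ and the definition of $\mathcal C_s(C^\vee)^{\mathrm{lrg}}$, then observe that swapping the two universal quantifiers over $a\in C$ and $v\in\mathbb C^s$, together with the test $v^*Xv\geqslant 0$ for positive semidefiniteness of a Hermitian $X$, identifies the two conditions. The paper writes this as a single displayed chain of four biconditionals, while you narrate it in prose and also sketch (and correctly caveat) an alternative duality-based route, but the substance is identical.
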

\begin{proof}
    We have the following equivalences:
    \begin{align*}
        C\subseteq\mathcal{S}_1(N_1,\dots,N_d) &\Leftrightarrow \forall a\in C\colon a_1N_1+\dots+a_dN_d\geqslant 0\\
        &\Leftrightarrow \forall v\in\mathbb{C}^s,a\in C\colon a_1(v^*N_1v)+\dots+a_d(v^*N_dv)\geqslant 0\\
        &\Leftrightarrow \forall v\in\mathbb{C}^s\colon (v^*N_1v,\dots,v^*N_dv)\in C^{\vee}\\
        &\Leftrightarrow (N_1,\dots,N_d)\in\mathcal{C}(C^{\vee})^{\mathrm{lrg}}.\qedhere
    \end{align*}
\end{proof}

\subsection{Polyhedral and Simplicial Cones}\label{ssec:poly} The following result on linear matrix inequality definitions of polyhedra is a  direct consequences of the above described facts. \begin{corollary} Let $P\subseteq\mathbb R^d$ be polyhedral, but not a simplex cone.
Then there exists a sequence of linear matrix inequality definitions of $P$, of which none is a sums of compressions of the previous one.
\end{corollary}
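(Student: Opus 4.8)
The plan is to read this off the discussion in \Cref{ssec:class}. The one external input needed is the classification from \cite{fritz}: since $P$ is polyhedral, $\mathcal{C}(P)^{\rm lrg}$ is a free spectrahedron, and since $P$ is not a simplex cone, $\mathcal{C}(P)^{\rm sml}$ is \emph{not} a free spectrahedron. With that in hand I would build the sequence recursively, starting from any linear matrix inequality definition $N^{(0)}$ of $P$ (for instance the diagonal one carrying the facet inequalities of $P$), and at each step producing a strictly smaller free spectrahedron that still cuts out $P$ at level $1$.

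For the recursive step I would argue as follows. Suppose $N^{(k)}$ is a tuple with $\mathcal{S}_1(N^{(k)})=P$. Since $\mathcal{C}(P)^{\rm sml}$ is the smallest operator system over $P$ we have $\mathcal{C}(P)^{\rm sml}\subseteq\mathcal{S}(N^{(k)})$, and this inclusion is strict (the right side is a free spectrahedron, the left side is not), so some level-$s$ element $A$ lies in $\mathcal{S}_s(N^{(k)})\setminus\mathcal{C}_s(P)^{\rm sml}$. By \Cref{thm:sep}, $\mathcal{C}(P)^{\rm sml}$ is an intersection of free spectrahedra $\mathcal{S}(M^{(i)})$, so for some $i$ we have $A\notin\mathcal{S}_s(M^{(i)})$; set $N^{(k+1)}:=(M^{(i)}_1\oplus N^{(k)}_1,\dots,M^{(i)}_d\oplus N^{(k)}_d)$, so that $\mathcal{S}(N^{(k+1)})=\mathcal{S}(M^{(i)})\cap\mathcal{S}(N^{(k)})$ levelwise. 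Then $\mathcal{C}(P)^{\rm sml}\subseteq\mathcal{S}(N^{(k+1)})\subsetneq\mathcal{S}(N^{(k)})$, where strictness is witnessed by $A$; the cones of $\mathcal{S}(N^{(k+1)})$ have nonempty interior because they contain those of the operator system $\mathcal{C}(P)^{\rm sml}$, so it really is a free spectrahedron; and at level $1$, $P=\mathcal{C}_1(P)^{\rm sml}\subseteq\mathcal{S}_1(N^{(k+1)})\subseteq\mathcal{S}_1(N^{(k)})=P$, so $N^{(k+1)}$ defines $P$. Finally, if $N^{(k+1)}$ were a sum of compressions of $N^{(k)}$, then \Cref{thm:cont} (equivalence of $(i)$ and $(iii)$) would force $\mathcal{S}(N^{(k)})\subseteq\mathcal{S}(N^{(k+1)})$, contradicting the strict inclusion; hence no term of the resulting sequence $N^{(0)},N^{(1)},N^{(2)},\dots$ is a sum of compressions of its predecessor.

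There is no genuine obstacle here: the statement is a direct consequence of \cite{fritz}, \Cref{thm:sep} and \Cref{thm:cont}. The only points asking for attention are invoking \cite{fritz} to know that $\mathcal{C}(P)^{\rm sml}$ fails to be a free spectrahedron exactly when $P$ is polyhedral but not simplicial, and verifying, as above, that intersecting with $\mathcal{S}(N^{(k)})$ both keeps us inside the class of free spectrahedra and preserves the level-$1$ slice $P$, so that the chain of strict inclusions really does yield the claimed sequence of definitions of $P$.
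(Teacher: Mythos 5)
Your proposal is correct and matches the paper's argument, which is given informally in \Cref{ssec:class}: invoke the Fritz--Netzer--Thom classification to see $\mathcal{C}(P)^{\rm sml}$ is not a free spectrahedron, use \Cref{thm:sep} to sandwich a strictly smaller free spectrahedron between $\mathcal{C}(P)^{\rm sml}$ and the current one, iterate, and rule out sums of compressions via \Cref{thm:cont}. You merely make explicit the intersection/direct-sum step and the properness check that the paper leaves implicit.
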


The following result on linear matrix inequality definitions for polyhedal cones is \cite{sanro}, and it also follows from the fact  that the largest operator system over a polyhedral cone is a free spectrahedron, see \cite{nepl}. 
It states that if the spectrahedron $C=\mathcal S_1(M_1,\ldots, M_d)\subseteq \mathbb R^d$ has a face of dimension $d-1$, then there exists $A\in{\rm GL}_r(\mathbb C)$ such that $$A^*M_iA=\left(\begin{array}{cc}m_i & 0 \\0 & \tilde M_i\end{array}\right)$$ for all $i=1,\ldots, d,$ and the supporting hyperplane at the face is defined by the equation $$m_1x_1+\cdots +m_dx_d= 0.$$ So for polyhedral $P$, any defining linear matrix inequality splits off the canonical diagonal/weakest one. The other block then defines some larger spectrahedron, about which we cannot say something in general. But in case the polyhedral cone is a simplex, we can. The main reason why simplex cones are so easy, is that  the positive orthant $\mathbb R_{\geqslant 0}^d$ is the only simplex cone in $\mathbb R^d,$ up to isomorphism. We can thus restrict to that case, without loss of generality.

\begin{proposition}Let $N_1,\ldots, N_d\in{\rm Her}_n(\mathbb C)$ with $N_1+\cdots +N_d=I_n.$
\begin{itemize}
\item[$(i)$] $\mathbb R_{\geqslant 0}^d\subseteq\mathcal S_1(N_1,\ldots, N_d)$ if and only if all $N_i$ are positive semidefinite.
\item[$(ii)$] $\mathbb R_{\geqslant 0}^d=\mathcal S_1(N_1,\ldots, N_d)$ if and only if there is a unitary $U\in{\rm Mat}_n(\mathbb C)$ with  $$U^*N_iU=\left(\begin{array}{cc}E_{ii} & 0 \\0 & \tilde N_i\end{array}\right)$$ and $\tilde N_i\geqslant 0$ for all $i=1,\ldots, d.$ Here, $E_{ii}$ denotes the standard matrix unit of size $d$.
\end{itemize}
\end{proposition}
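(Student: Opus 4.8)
The plan is to treat the two parts separately, part $(i)$ being immediate and part $(ii)$ carrying the real content in one of its two directions.

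For $(i)$, I would invoke \Cref{thm:genspeccontain} with $C=\mathbb R_{\geqslant 0}^d$, using that the positive orthant is self-dual, so that $\mathbb R_{\geqslant 0}^d\subseteq\mathcal S_1(N_1,\ldots,N_d)$ is equivalent to $(N_1,\ldots,N_d)\in\mathcal C(\mathbb R_{\geqslant 0}^d)^{\rm lrg}$; unwinding the definition of the largest operator system, this says exactly that $v^*N_iv\geqslant 0$ for all $v$ and all $i$, i.e.\ each $N_i\geqslant 0$. Alternatively one checks this directly: taking $a=e_j$ gives $N_j=\sum_i(e_j)_iN_i\geqslant 0$, and conversely nonnegative combinations of psd matrices are psd.

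For $(ii)$, the ``if'' direction is a short computation: from $\sum_iN_i=I_n$ and the assumed block form one gets $\sum_i\tilde N_i=I_{n-d}$, and then for $a\in\mathbb R^d$ the matrix $\sum_ia_iN_i$ is unitarily equivalent to $\mathrm{diag}(a_1,\ldots,a_d)\oplus\sum_ia_i\tilde N_i$, which is psd if and only if all $a_i\geqslant 0$ — the second block being automatically psd once the diagonal block is. The substance is the ``only if'' direction, where the plan is to extract the claimed eigenstructure of the $N_i$. First, part $(i)$ gives $N_i\geqslant 0$, hence $N_i\leqslant\sum_jN_j=I_n$ and $\|N_i\|\leqslant 1$. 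Next I would show that $1$ is actually an eigenvalue of each $N_i$: otherwise $\|N_i\|<1$ and, taking $a$ with $a_j=1$ for $j\neq i$ and $a_i=-\delta$ for small $\delta>0$, the matrix $\sum_ja_jN_j=I_n-(1+\delta)N_i$ is psd, so this point lies in $\mathcal S_1(N_1,\ldots,N_d)=\mathbb R_{\geqslant 0}^d$ despite having negative $i$-th coordinate, a contradiction. Set $V_i:=\ker(I_n-N_i)\neq\{0\}$. For a unit vector $v\in V_i$ one has $\sum_{j\neq i}N_jv=v-N_iv=0$, and since $\langle N_jv,v\rangle\geqslant 0$ for each term, this forces $N_jv=0$ for all $j\neq i$; in particular $V_i\perp V_j$ for $i\neq j$, because $\langle v,w\rangle=\langle N_iv,w\rangle=\langle v,N_iw\rangle=0$ whenever $v\in V_i$ and $w\in V_j$. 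Finally I pick a unit vector $v_i\in V_i$ for each $i$, observe the $v_i$ are orthonormal, complete them to an orthonormal basis $v_1,\ldots,v_d,u_{d+1},\ldots,u_n$ of $\mathbb C^n$, and let $U$ be the unitary with these columns. Using $N_iv_i=v_i$, $N_iv_j=0$ for $j\neq i$, and self-adjointness, a direct entrywise computation shows that the first $d$ rows and columns of $U^*N_iU$ contribute exactly $E_{ii}$ with no off-diagonal coupling, so $U^*N_iU=E_{ii}\oplus\tilde N_i$; and $\tilde N_i\geqslant 0$ because it is a compression of the psd matrix $N_i$.

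The main obstacle I anticipate is the step isolating the eigenvalue $1$ and establishing pairwise orthogonality of the spaces $V_i$ — this is precisely where both the inclusion $\mathcal S_1(N_1,\ldots,N_d)\subseteq\mathbb R_{\geqslant 0}^d$ and the normalization $\sum_iN_i=I_n$ are genuinely used. A second point requiring care is that the $V_i$ need not be one-dimensional (already for $d=1$ one has $N_1=I_n$ and $V_1=\mathbb C^n$); the argument must select only one vector from each $V_i$ and absorb the remainder harmlessly into $\tilde N_i$.
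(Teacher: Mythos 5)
Your proof is correct and takes essentially the same route as the paper's: extract a unit eigenvector of each $N_i$ at eigenvalue $1$, show these vectors annihilate the other $N_j$ and are pairwise orthogonal, and assemble the unitary $U$ from them, with $\tilde N_i\geqslant 0$ as a compression of $N_i$. The paper phrases this as an iterative unitary diagonalization that peels off one $1\times 1$ block per index, while you collect all $d$ eigenvectors at once; you also make explicit the small-perturbation argument (take $a_i=-\delta$, $a_j=1$ for $j\neq i$) establishing that $I_n-N_i$ is singular, which the paper states but does not spell out.
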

\begin{proof} ($i$) is obvious. For ($ii$) note that 
 $\mathbb R_{\geqslant 0}^d=\mathcal S_1(N_1,\ldots, N_d)$ just means, in addition to all $N_i\geqslant 0$, that $\sum_{i\neq j}N_i$ is rank deficient, for each $j=1,\ldots, d.$ For matrices with the stated property, this is clearly fulfilled. Conversely, we diagonalize  $$U^*(N_2+\cdots +N_{d})U=\left(\begin{array}{cc}0 & 0 \\0 & *\end{array}\right)=U^*N_2U+\cdots +U^*N_{d}U$$ with a suitable unitary $U$, and obtain $$U^*N_iU= \left(\begin{array}{cc}0 & 0 \\0 & *\end{array}\right)\ \mbox{ for } i=2,\ldots, d\ \mbox{ and }\  U^*N_1U=\left(\begin{array}{cc}1 & 0 \\0 & *\end{array}\right).$$ Repeating this argument with the lower right blocks proves the claim.
\end{proof}
\subsection{The Circular Cone}\label{ssec:disk}
Throughout this section let $$D:=\left\{ (a_0,a_1,a_2)\in \mathbb R^3\mid 0\leqslant a_0, a_1^2+a_2^2\leqslant a_0^2\right\}$$ be the standard circular cone in $\mathbb R^3$.
From \cite{choi2} it follows that  $\mathcal C(D)^{\rm sml}$ is a free spectrahedron  (see also  \cite[Example 4.10]{fritz} or \cite[Corollary 14.15]{HKMS}), defined by the linear matrix inequality $$A_0\otimes I_2+ A_1\otimes \underbrace{\left(\begin{array}{cc}1 & 0 \\0 & -1\end{array}\right)}_{=:M_1} +A_2\otimes \underbrace{\left(\begin{array}{cc}0 & 1 \\1 & 0\end{array}\right)}_{=:M_2}\geqslant 0.$$
We can use this fact to characterize all linear matrix inequalities that define $D$. This is precisely the dual statement of \cite[Theorem 4.5]{woe}. Since our proof is different, in particular the one  of ($i$), we include it for completeness.

\begin{theorem}\label{thm:circcone}Let  $N_0,N_1,N_2\in{\rm Her}_n(\mathbb C)$ be arbitrary.
\begin{itemize}
\item[$(i)$]  We have $D\subseteq \mathcal S_1(N_0,N_1,N_2)$ if and only if there are $P,Q\in{\rm Mat}_{r,n}(\mathbb C)$ with $$N_0=P^*P+Q^*Q\quad\mbox{and}\quad N_1=P^*Q+Q^*P\quad \mbox{and}\quad  N_2=i(Q^*P-P^*Q).$$

\item[$(ii)$]  We have $D=\mathcal S_1(N_0,N_1,N_2)$ if and only if we can choose $r=n-1$ in $(i)$, and this happens already if the boundary of $S_1(N_0,N_1,N_2)$ contains at least $n+1$ rays from $D$.
\end{itemize}
\end{theorem}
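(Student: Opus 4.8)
The plan is to reduce both parts to the identity $\mathcal C(D)^{\rm lrg}=\mathcal W(I_2,M_1,M_2)$ and then make that free numerical range explicit. For $(i)$, I would first note that $D$ is self‑dual, $D^\vee=D$, so \Cref{thm:dual}$(iii)$–$(iv)$ give $\mathcal C(D)^{\rm lrg}=\bigl(\mathcal C(D)^{\rm sml}\bigr)^{\vee_{\rm fr}}=\mathcal S(I_2,M_1,M_2)^{\vee_{\rm fr}}=\mathcal W(I_2,M_1,M_2)$. By \Cref{thm:genspeccontain} (with $C=D$ and $D^\vee=D$), the inclusion $D\subseteq\mathcal S_1(N_0,N_1,N_2)$ is equivalent to $(N_0,N_1,N_2)\in\mathcal W_n(I_2,M_1,M_2)$, i.e.\ to $(N_0,N_1,N_2)=\sum_j V_j^*(I_2,M_1,M_2)V_j$ with $V_j\in{\rm Mat}_{2,n}(\mathbb C)$. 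Writing $V_j=\binom{p_j}{q_j}$ with rows $p_j,q_j\in\mathbb C^{1\times n}$ and letting $P,Q$ have rows $p_j,q_j$, a direct computation yields $N_0=P^*P+Q^*Q$, $N_1=P^*P-Q^*Q$, $N_2=P^*Q+Q^*P$; the invertible substitution $P\mapsto\tfrac1{\sqrt2}(P-iQ)$, $Q\mapsto\tfrac1{\sqrt2}(P+iQ)$ — equivalently, conjugating the generating triple by the unitary cyclically permuting the Pauli matrices $M_1\mapsto M_2\mapsto M_3$, i.e.\ replacing it by $(I_2,M_2,M_3)$ — turns this into exactly the form in $(i)$. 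Since the substitution is a bijection, $(i)$ follows.

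For $(ii)$ I would set $R:=\tfrac12(N_1+iN_2)$, so that the system in $(i)$ is equivalent to $P^*P+Q^*Q=N_0$ and $P^*Q=R$, and introduce the Hermitian trigonometric matrix polynomial $M(\theta):=N_0+\cos\theta\,N_1+\sin\theta\,N_2=N_0+e^{-i\theta}R+e^{i\theta}R^*$. Then an $(i)$‑representation with $P,Q\in{\rm Mat}_{r,n}(\mathbb C)$ is precisely a factorisation $M(\theta)=H(\theta)^*H(\theta)$ with $H(\theta)=P+e^{-i\theta}Q\in{\rm Mat}_{r,n}(\mathbb C)$, while $D\subseteq\mathcal S_1(N)$ is equivalent to $M(\theta)\succeq 0$ for all $\theta$ (the rays $(1,\cos\theta,\sin\theta)$ generate $D$). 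After the harmless reduction to a non‑degenerate representation ($\bigcap_i\ker N_i=0$, whence $N_0\succ 0$ once $D\subseteq\mathcal S_1(N)$), the central step is the lemma: \emph{$\mathcal S_1(N)=D$ if and only if $\det M(\theta)\equiv 0$.} Its ``only if'' is the standard description of the boundary of a spectrahedron with $N_0\succ 0$ as the vanishing locus of the determinant, applied to the infinitely many extreme rays $(1,\cos\theta,\sin\theta)$ of $D=\mathcal S_1(N)$; for ``if'' one checks that any $a\notin D$ (say with $|\omega|>a_0$, $\omega=a_1+ia_2$) fails $\sum_i a_iN_i\succeq 0$ by evaluating $\sum_i a_iN_i=(a_0-|\omega|)N_0+|\omega|M(\theta)$ at $e^{i\theta}=\omega/|\omega|$ on a kernel vector of $M(\theta)$ and using $N_0\succ 0$.

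Granting the lemma, and taking $D\subseteq\mathcal S_1(N)$ as the standing hypothesis (automatic once an $(i)$‑representation exists), the three assertions come out as follows. If an $(i)$‑representation with $r=n-1$ exists, then $H(\theta)$ has at most $n-1$ rows, so ${\rm rank}\,M(\theta)\le n-1$, hence $\det M(\theta)\equiv 0$ and $\mathcal S_1(N)=D$. If, moreover, $\partial\mathcal S_1(N)$ contains $n+1$ distinct rays of $D$, say $(1,\cos\theta_l,\sin\theta_l)$, then $\det M(\theta)$ — a nonnegative trigonometric polynomial of degree $\le n$ — vanishes at the $n+1$ distinct points $\theta_l$; since a nonnegative trigonometric polynomial vanishes to even order at each of its zeros, its total multiplicity would be $\ge 2(n+1)>2n$ unless it is identically zero, so $\det M(\theta)\equiv 0$ and $\mathcal S_1(N)=D$. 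Conversely, if $\mathcal S_1(N)=D$ then the lemma gives $\det M(\theta)\equiv 0$, so the positive semidefinite degree‑one trigonometric polynomial $M(\theta)$ has everywhere deficient rank $\le n-1$, and a (rank‑deficient) Fejér–Riesz factorisation produces $H(\theta)=P+e^{-i\theta}Q$ with $H$ of size $(\text{generic rank of }M)\times n\le(n-1)\times n$; padding with zero rows gives $r=n-1$.

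I expect the genuine difficulty to be exactly this last point: producing a spectral factor of $M(\theta)$ of the small size $(n-1)\times n$ rather than the default $n\times n$. This is where the hypothesis $\mathcal S_1(N)=D$ (through $\det M(\theta)\equiv 0$) is used crucially, and it requires a rank‑minimising refinement of the operator Fejér–Riesz theorem — or, equivalently, a Kronecker‑type normal form for the singular matrix pencil $R^*+zN_0+z^2R$. Everything else is soft: duality for $(i)$, an elementary boundary‑of‑spectrahedron computation for the lemma, and a count of zeros of a trigonometric polynomial for the ``$n+1$ rays'' criterion.
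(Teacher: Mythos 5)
For part $(i)$ your argument is essentially the paper's: the paper goes directly from $D\subseteq\mathcal S_1(N_0,N_1,N_2)$ to $\mathcal S(I_2,M_1,M_2)=\mathcal C(D)^{\rm sml}\subseteq\mathcal S(N_0,N_1,N_2)$ and then uses \Cref{thm:cont}$(iii)$; you route the same fact through self-duality and \Cref{thm:genspeccontain}, which is an equivalent formulation, and then perform the identical row-splitting and change of variables $P\mapsto\tfrac1{\sqrt2}(P-iQ)$, $Q\mapsto\tfrac1{\sqrt2}(P+iQ)$. No difference in substance.

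Part $(ii)$ is where you genuinely diverge, and where you have a gap. You pass to $M(\theta)=N_0+\cos\theta\,N_1+\sin\theta\,N_2$ and reduce the whole statement to the identity $\det M(\theta)\equiv 0$. Your ``$n+1$ rays'' argument — $\det M(\theta)$ is a nonnegative trigonometric polynomial of degree at most $n$, its zeros have even order, so $n+1$ distinct zeros force $\geqslant 2n+2>2n$ and hence $\det M\equiv 0$ — is correct and is the same degree-count that the paper applies to the $n\times n$ minors of $z\bar P+\bar Q$ (degree $\leqslant n$ in $z$, $n+1$ distinct roots, hence identically zero). But your converse ``$\mathcal S_1(N)=D\Rightarrow r=n-1$'' rests entirely on a rank-minimising matrix Fejér--Riesz theorem: that a positive semidefinite degree-one trigonometric matrix polynomial of generically deficient rank $\rho\leqslant n-1$ admits a \emph{polynomial} spectral factor $H(\theta)=P+e^{-i\theta}Q$ of size $\rho\times n$, not merely of the default size $n\times n$. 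You flag this yourself as ``the genuine difficulty,'' and indeed it is: this is not the standard Rosenblum operator Fejér--Riesz theorem (which produces a square factor of the same size), it is a rank-deficient refinement that needs a precise reference or a proof, neither of which you supply. As it stands, this step is a hole.

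The paper avoids this machinery entirely with a short, self-contained argument: take any $(i)$-decomposition $\bar P,\bar Q$ with $\bar Q^*\bar Q$ maximal in the Loewner order (for the given $r$); the rank drop of $z\bar P+\bar Q$ at $n+1$ points forces $\operatorname{rank}\bar Q\leqslant n-1$; an SVD of $\bar Q$ puts $\tilde Q=\binom{Q}{0}$ and $\tilde P=\binom{P}{H}$ with $P,Q$ of size $(n-1)\times n$, and swapping $H$ from $\tilde P$ into $\hat Q=\binom{Q}{H}$ strictly increases $\hat Q^*\hat Q$ unless $H=0$, so maximality kills $H$ and yields $r=n-1$ directly. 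What your route buys is conceptual clarity and a clean link to the determinantal boundary description and trigonometric positivity theory; what it costs is dependence on a nontrivial factorization theorem you do not establish. What the paper's route buys is complete self-containment at the price of the somewhat ad hoc ``maximal $\bar Q^*\bar Q$'' trick. If you want to keep your structure, you must either cite a precise rank-minimising Fejér--Riesz reference (e.g.\ for degree-one trigonometric matrix polynomials) or prove the factorization; alternatively, replace that one step with the paper's maximality/SVD argument, which in your notation is exactly the construction of a minimal-row spectral factor of $M(\theta)$.
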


\begin{proof}
($i$)  From $D\subseteq \mathcal S_1(N_0,N_1,N_2)$ we get  $$\mathcal S(I_2, M_1,M_2) = \mathcal C(D)^{\rm sml}\subseteq \mathcal S(N_0,N_1,N_2),$$ and from \Cref{thm:cont} we obtain  $V_1,\ldots, V_r\in{\rm Mat}_{2,n}(\mathbb C)$ with $$N_0=\sum_{j=1}^r V_j^*V_j,\quad N_1=\sum_{j=1}^r V_j^*M_1V_j, \quad  N_2=\sum_{j=1}^r V_j^*M_2V_j.$$  We set $$P= \frac{1}{\sqrt{2}}\left(\begin{array}{c}(1,-i)\cdot V_1 \\\vdots \\(1,-i)\cdot V_r\end{array}\right), \ Q=\frac{1}{\sqrt{2}}\left(\begin{array}{c}(1,i)\cdot V_1 \\\vdots \\(1,i)\cdot V_r\end{array}\right)\in {\rm Mat}_{r,n}(\mathbb C)$$ and obtain the desired representation.  Conversely, every such representation shows $D\subseteq \mathcal S_1(N_0,N_1,N_2).$ Indeed for $a,b\in\mathbb R$ with $a^2+b^2=1$ we set $z:=a+bi\in\mathbb C$ and compute \begin{equation}\label{eq:1}0\leqslant (zP+Q)^*(zP+Q)=N_0+\overline zP^*Q+zQ^*P=N_0+aN_1+bN_2.\end{equation}

($ii$) Let $\bar  P,\bar  Q$ provide a decomposition for $N_0,N_1,N_2$ as in ($i$). We further assume that $\bar Q^*\bar Q$ is maximal w.r.t. $\leqslant$ among all such decompositions (for fixed $r$).
Now assume  we have $(1,a,b)\in\partial \mathcal S_1(N_0,N_1,N_2)$  for some $a,b\in\mathbb R$ with $a^2+b^2=1$. Then  (\ref{eq:1})  implies that for $z=a+bi,$ the matrix $$z\bar P+\bar Q\in{\rm Mat}_{r,n}(\mathbb C)$$ has rank at most $n-1.$ If this happens for at least  $n+1$ different numbers $z$,  $\bar Q$ has rank at most $n-1.$
We now compute  the singular value decomposition $\bar Q=U\Sigma V^*,$ and observe that $$\tilde P:=U^*\bar P \mbox{ and } \ \tilde Q:=U^*\bar Q=\Sigma V^*$$ give rise to $N_0,N_1,N_2$ in the same way as $\bar P$ and $\bar Q$, but $$\tilde P=\left(\begin{array}{c}P \\H\end{array}\right), \quad \tilde Q=\left(\begin{array}{c}Q \\0\end{array}\right)$$ with $P,Q\in {\rm Mat}_{n-1,n}(\mathbb C).$ Since  $$\hat P:=\left(\begin{array}{c}P \\0\end{array}\right),\quad  \hat Q:=\left(\begin{array}{c}Q \\ H\end{array}\right)$$ also represent $N_0,N_1,N_2,$ and since  $$\hat Q^*\hat Q=  Q^* Q +H^*H= \tilde Q^*\tilde Q+ H^*H= \bar Q^*\bar Q+ H^*H,$$ maximality of $\bar Q^*\bar Q$ implies  $H=0.$ Thus   $P$ and $Q$ provide the desired representation of $N_0,N_1,N_2$.

For the converse, assume we have a representation for $N_0,N_1,N_2$ by $P,Q\in{\rm Mat}_{n-1,n}(\mathbb C)$. Then for each $z=a+bi\in\mathbb C$ with $a^2+b^2=1$, the matrix $$N_0+aN_1+bN_2=(zP+Q)^*(zP+Q)\geqslant 0$$ is singular. This implies $D=\mathcal S_1(N_0,N_1,N_2)$.
\end{proof}

Recall the definition of the matrices $M_1,M_2$ above. Additionally now let
\begin{equation*}
    M_3:=\begin{pmatrix}
        0 & i\\
        -i & 0
    \end{pmatrix}.
\end{equation*}
With these matrices we can explicitly describe $\mathcal{C}(D)^{\mathrm{lrg}}$ as a free spectrahedral shadow.

\begin{theorem}
    Let $N_0,N_1,N_2\in\mathrm{Her}_n(\mathbb{C})$. Then $(N_0,N_1,N_2)\in\mathcal{C}(D)^{\mathrm{lrg}}_n$ if and only if there exists a Hermitian matrix $N_3\in\mathrm{Her}_n(\mathbb{C})$ with
    \begin{equation*}
        N_0\otimes I_2+N_1\otimes M_1+N_2\otimes M_2+N_3\otimes M_3\geqslant 0.
    \end{equation*}
\end{theorem}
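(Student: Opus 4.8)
The plan is to reduce the claim, via self-duality of $D$, to \Cref{thm:circcone}$(i)$, and then to verify one pair of explicit matrix identities. First I would record that $D$ is self-dual, $D^\vee=D$: it is the cone over the unit disk in the hyperplane $\{a_0=1\}$, so $b_0a_0+b_1a_1+b_2a_2\geqslant 0$ holds for all $(a_0,a_1,a_2)\in D$ exactly when $b_0\geqslant\sqrt{b_1^2+b_2^2}$. Hence \Cref{thm:genspeccontain}, applied with $C=D$, shows that $(N_0,N_1,N_2)\in\mathcal C(D)^{\mathrm{lrg}}_n$ if and only if $D\subseteq\mathcal S_1(N_0,N_1,N_2)$, and by \Cref{thm:circcone}$(i)$ the latter holds if and only if there are $r\geqslant 1$ and $P,Q\in\mathrm{Mat}_{r,n}(\mathbb C)$ with $N_0=P^*P+Q^*Q$, $N_1=P^*Q+Q^*P$, $N_2=i(Q^*P-P^*Q)$. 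So it suffices to prove that such $P,Q$ exist if and only if there is a Hermitian $N_3$ with $N_0\otimes I_2+N_1\otimes M_1+N_2\otimes M_2+N_3\otimes M_3\geqslant 0$. Reorganising the left-hand side into $2\times2$ block form (which does not change positivity) turns this inequality into
\[
\begin{pmatrix}N_0+N_1 & N_2+iN_3\\ N_2-iN_3 & N_0-N_1\end{pmatrix}\geqslant 0 .
\]

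For the forward implication I would set $N_3:=P^*P-Q^*Q$. Then $N_0+N_1=(P+Q)^*(P+Q)$, $N_0-N_1=(P-Q)^*(P-Q)$ and $N_2+iN_3=i(P+Q)^*(P-Q)$, so the block matrix above equals $W^*W$ for $W:=\begin{pmatrix}P+Q & i(P-Q)\end{pmatrix}\in\mathrm{Mat}_{r,2n}(\mathbb C)$ and is therefore positive semidefinite. For the converse, given a Hermitian $N_3$ for which the block matrix is positive semidefinite, write it as $W^*W$ with $W=\begin{pmatrix}R & S\end{pmatrix}$ and $R,S\in\mathrm{Mat}_{m,n}(\mathbb C)$; comparing blocks gives $N_0+N_1=R^*R$, $N_0-N_1=S^*S$, $N_2+iN_3=R^*S$ (and $S^*R=N_2-iN_3$). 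Setting $P:=\tfrac12(R-iS)$ and $Q:=\tfrac12(R+iS)$, a routine expansion yields $P^*P+Q^*Q=N_0$, $P^*Q+Q^*P=N_1$ and $i(Q^*P-P^*Q)=N_2$, i.e.\ exactly the decomposition demanded by \Cref{thm:circcone}$(i)$, and hence $(N_0,N_1,N_2)\in\mathcal C(D)^{\mathrm{lrg}}_n$.

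I do not expect a genuinely hard step here: once the reduction is in place, both directions are elementary algebra. The one point requiring an idea is the choice $N_3=P^*P-Q^*Q$, which comes from recognising $\{I_2,M_1,M_2,M_3\}$ as (up to signs) the Pauli basis of $\mathrm{Her}_2(\mathbb C)$; from that viewpoint the four-term linear matrix inequality just says that the $2\times2$ block matrix above is a Gram matrix $W^*W$, with the extra variable $N_3$ carrying the skew-Hermitian part of its off-diagonal block. The other mildly delicate point is that the two directions are genuinely asymmetric — the forward one assembles $W$ from $P$ and $Q$, the backward one recovers $P$ and $Q$ from a factorisation of the block matrix. Alternatively one could avoid \Cref{thm:circcone} entirely: using \Cref{thm:dual} together with $D^\vee=D$ one gets $\mathcal C(D)^{\mathrm{lrg}}=\mathcal W(I_2,M_1,M_2)$, and expanding the generating formula for $\mathcal W_n(I_2,M_1,M_2)$ leads to the same computation.
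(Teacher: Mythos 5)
Your proof is correct and follows essentially the same route as the paper's: both reduce via self-duality of $D$ and \Cref{thm:genspeccontain} to the $P,Q$-decomposition of \Cref{thm:circcone}$(i)$, both choose $N_3:=P^*P-Q^*Q$ with the same factor $\begin{pmatrix}P+Q & i(P-Q)\end{pmatrix}$, and both recover $P,Q$ from a factorisation $B^*B$ by the identical formulas $P=(U-iV)/2$, $Q=(U+iV)/2$. The only cosmetic difference is that you make the $2\times 2$ block form of the four-term LMI explicit, which the paper leaves implicit.
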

\begin{proof}
    Using the self-duality of $D$ and \Cref{thm:genspeccontain} we first observe that $(N_0,N_1,N_2)\in\mathcal{C}(D)^{\mathrm{lrg}}_n$ is equivalent to $D\subseteq\mathcal{S}_1(N_0,N_1,N_2)$. Given this, we can find matrices $P,Q\in\mathrm{Mat}_{r,n}(\mathbb{C})$ as in \Cref{thm:circcone}~($i$). Now we can choose $N_3:=P^*P-Q^*Q$ and set $$A:=\begin{pmatrix}(P+Q) & i(P-Q)\end{pmatrix}\in\mathrm{Mat}_{r,2n}(\mathbb{C}).$$ Then we have
    \begin{equation*}
        N_0\otimes I_2+N_1\otimes M_1+N_2\otimes M_2+N_3\otimes M_3=A^*A\geqslant 0.
    \end{equation*}
    For the converse, we can first write
    \begin{equation*}
        N_0\otimes I_2+N_1\otimes M_1+N_2\otimes M_2+N_3\otimes M_3=B^*B
    \end{equation*}
    with $B=\begin{pmatrix} U & V\end{pmatrix}\in\mathrm{Mat}_{r,2n}(\mathbb{C})$. When we now set $P:=(U-iV)/2$ and $Q:=(U+iV)/2$ we obtain a decomposition as in \Cref{thm:circcone}~($i$).
\end{proof}

We know  that $\mathcal C(D)^{\rm lrg}$ is not a free spectrahedron. However,  as explained above, we cannot exclude a priori that  there is one or at least  finitely many maximal free spectrahedra over $D$, that contain all the others.   Before we can decide this question, we introduce the following construction.
Let $P\subseteq D$ be  a regular polyhedral cone  with $r$ extreme rays, that all lie on the boundary of $D$. Let $\ell_1,\ldots,\ell_r\in\mathbb R[x_0,x_1,x_2]$ be the linear forms that define $P$ by inequalities, where we assume the coefficient of $x_0$ to be $1$ in all $\ell_j$, to make them unique. We  set $h=\ell_1\cdots \ell_r\in\mathbb R[x_0,x_1,x_2]$ and consider its partial  derivative $$\frac{\partial}{\partial x_0}h\in\mathbb R[x_0,x_1,x_2]$$ with respect to $x_0$. See \Cref{fig:ren} for an illustration  of the real zero set of $h$ and $\frac{\partial}{\partial x_0}h$ in the case of $r=7.$
The below proof was suggested to us by Mario Kummer. It significantly simplifies our previous proof. We are grateful to him for letting us include it here.
\begin{proposition}\label{prop:deriv}
For $r\geqslant 3$ odd,  $\frac{\partial}{\partial x_0}h$ factors into quadratic forms of the type $$\lambda x_0^2-x_1^2-x_2^2$$ with $\lambda\geqslant 1$, and $x_0^2-x_1^2-x_2^2$ is among these forms.
\end{proposition}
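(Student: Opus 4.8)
The plan is to exploit the rotational symmetry of the configuration. Since $P\subseteq D$ is a \emph{regular} polyhedral cone with $r$ extreme rays on $\partial D$, the linear forms $\ell_1,\dots,\ell_r$ are obtained from one another by rotating the $(x_1,x_2)$-plane through multiples of $2\pi/r$; concretely, after the normalization that the $x_0$-coefficient equals $1$, we have $\ell_j = x_0 - \cos(\theta_j)x_1 - \sin(\theta_j)x_2$ with $\theta_j = \theta_0 + 2\pi j/r$ for suitable $\theta_0$. Hence $h=\prod_{j=1}^r \ell_j$ is invariant (up to the obvious relabeling) under the order-$r$ rotation group acting on $(x_1,x_2)$, and since differentiation with respect to $x_0$ commutes with this action, $\frac{\partial}{\partial x_0}h$ is a rotation-invariant form of degree $r-1$ in the three variables, with the $x_0$-degree dropping by exactly one.

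The key computational step is to pass to a convenient coordinate system: write $x_1 = \rho\cos\phi$, $x_2 = \rho\sin\phi$, or better, work with the complex variable $w = x_1 + i x_2$ so that $\ell_j = x_0 - \tfrac12(\zeta^{-j}\bar c\, w + \zeta^{j} c\,\bar w)$ where $\zeta = e^{2\pi i/r}$ and $c = e^{i\theta_0}$. Then $h(x_0,w,\bar w) = \prod_{j=0}^{r-1}\bigl(x_0 - \tfrac12(\zeta^{-j}\bar c w + \zeta^j c\bar w)\bigr)$. I expect that this product telescopes using the identity $\prod_{j=0}^{r-1}(t - \zeta^j s) = t^r - s^r$: setting things up so that $h$ becomes essentially a polynomial in $x_0$ and in $x_0^2 - (x_1^2+x_2^2)$ alone. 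More precisely, I anticipate an identity of the shape $h(x_0,x_1,x_2) = f_r\bigl(x_0,\; x_0^2 - x_1^2 - x_2^2\bigr)$ for a two-variable polynomial $f_r$, equivalently $h$ is a univariate polynomial of degree $r$ in $x_0$ whose coefficients are polynomials in $s := x_1^2+x_2^2$, organized so that each monomial $x_0^{r-2k}s^k$ appears. Differentiating in $x_0$ then yields a form of degree $r-1$; since $r$ is odd, $r-1$ is even, and the claim is that $\frac{\partial}{\partial x_0}h$ factors as a product of $(r-1)/2$ quadratics each of the form $\lambda x_0^2 - x_1^2 - x_2^2$.

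To pin down the factorization and the conditions $\lambda\geqslant 1$ and $\lambda = 1$ occurring, I would restrict to the plane $x_1^2 + x_2^2 = 1$ (i.e. $s=1$) and view $g(x_0) := \frac{\partial}{\partial x_0}h\big|_{s=1}$ as a univariate polynomial of degree $r-1$ in $x_0$. Its roots are exactly the values $\pm\sqrt{\lambda}$ for the quadratic factors, so it suffices to show: (a) $g$ has only real roots, coming in $\pm$ pairs — the $\pm$ symmetry is immediate since $h|_{s=1}$ is even or odd in $x_0$ according to the parity of $r$, and for $r$ odd $h|_{s=1}$ is odd, so its $x_0$-derivative is even; (b) every root $x_0^\ast$ satisfies $|x_0^\ast|\geqslant 1$, i.e. $\lambda\geqslant 1$; and (c) $x_0 = \pm 1$ is a root, i.e. $\lambda=1$ occurs. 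For (a), I would use that $h|_{s=1}=\prod_j(x_0 - \cos(\phi - \theta_j))$ (writing $x_1=\cos\phi, x_2=\sin\phi$) is a real polynomial with all roots real and in $[-1,1]$, whence by Rolle's theorem its derivative has $r-1$ real roots interlacing them — this simultaneously gives (a). For (b), the roots $\cos(\phi-\theta_j)$ of $h|_{s=1}$ all lie in $[-1,1]$, but to force the \emph{derivative's} roots to lie outside $(-1,1)$ I would instead argue at a generic $\phi$: actually the cleaner route is that the factorization into $\lambda x_0^2 - x_1^2 - x_2^2$ is forced by the rotational invariance (these are the only rotation-invariant quadratics up to scaling, together with $x_0^2$ and $s$ themselves), so once I know $\frac{\partial}{\partial x_0}h$ is rotation invariant, of even degree $r-1$, and — crucially — \emph{vanishes nowhere on the interior of $D$ except where one of these quadratics vanishes}, the factors are identified; then $\lambda\geqslant 1$ is equivalent to each factor being nonnegative on $D$, which I would check by evaluating $\frac{\partial}{\partial x_0}h$ at an interior point of $D$ and using that $h>0$ there and $\partial h/\partial x_0>0$ there (since increasing $x_0$ moves deeper into every half-space $\ell_j\geqslant 0$). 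For (c), the boundary of $D$, i.e. the cone $x_0^2 = x_1^2 + x_2^2$, is an envelope of the family of planes $\ell_j = 0$ in the limit $r\to\infty$; for finite $r$ it is the limiting tangent, and one checks directly that $x_0 = \sqrt{s}$ is a root of $g$ because the ``smallest'' quadratic factor of the derivative of a product of tangent-to-circle forms is the circle itself — concretely this falls out of the telescoped formula for $h$.

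The main obstacle I foresee is step (b)/(c): turning the qualitative rotational-invariance argument into the precise statement that the roots $\lambda$ are $\geqslant 1$ with $1$ attained. I expect the honest proof goes through an explicit closed form for $h$ — deriving $h(x_0,x_1,x_2) = \prod_{k}\bigl(\lambda_k x_0^2 - x_1^2 - x_2^2\bigr)^{?}$ is \emph{false} for $h$ itself (its degree is $r$, odd), but $h$ should equal $x_0\cdot(\text{product of quadratics})$ or a sum of two such products; the cleanest is probably to compute $h$ via the complex-variable telescoping above, obtaining a polynomial $h = \sum_{k} a_k x_0^{r-2k} s^k$ with explicit binomial-type coefficients $a_k$, differentiate, and then recognize the resulting even polynomial in $x_0$ (with $s$ as parameter) as a classical orthogonal-polynomial-like object (I suspect a Chebyshev connection, since $h|_{s=1} = \prod_j(x_0 - \cos(\phi-\theta_j))$ and sums/products of such are Chebyshev values) whose roots are explicitly $\cos$ of rational multiples of $\pi$ scaled appropriately — from which $\lambda_k = 1/\cos^2(k\pi/r)$ or similar, manifestly $\geqslant 1$, with $\lambda = 1$ (the $k$ giving $\cos = \pm 1$, i.e. essentially the ``middle'' factor) always present precisely because $r$ is odd. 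I would therefore structure the writeup as: (1) set up the complex coordinate and the rotation symmetry; (2) telescope to get the closed form for $h$ as a polynomial in $x_0$ and $s$; (3) differentiate and identify the roots in $x_0$ via the Chebyshev/trigonometric substitution, reading off the $\lambda$'s; (4) observe $\lambda\geqslant 1$ and that $x_0^2 - x_1^2 - x_2^2$ appears because $r$ is odd.
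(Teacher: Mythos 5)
Your approach (Chebyshev/telescoping, then read off the roots explicitly) is genuinely different from the paper's, which argues by a dimension count: the space of degree-$(r-1)$ forms invariant under the rotation by $2\pi/r$ has dimension $\tfrac{r+1}{2}$, the vanishing of $\partial_{x_0} h$ at the $\tfrac{r-1}{2}$ vertex pairs (orbits of common zeros of consecutive $\ell_i$) cuts this down to a one-dimensional space, and the stated product of quadratics visibly lies in that space. Your computational route could in principle be made rigorous and even yield the explicit eigenvalues $\lambda_k$, but as written it has several concrete errors.

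First and most seriously, your normalization of the facet forms is wrong. You take $\ell_j=x_0-\cos\theta_j\,x_1-\sin\theta_j\,x_2$, i.e.\ the $(x_1,x_2)$-part is a \emph{unit} vector; but the hypothesis is only that the $x_0$-coefficient equals $1$. For a regular $r$-gon \emph{inscribed} in the circle (vertices on $\partial D$, which is the hypothesis), the correct form is $\ell_j=x_0-\tfrac{1}{\cos(\pi/r)}\bigl(\cos\theta_j\,x_1+\sin\theta_j\,x_2\bigr)$. Your forms instead describe the \emph{circumscribed} polygon with edges tangent to the circle. This is not a cosmetic slip: with the Chebyshev identity $\prod_j\bigl(t-\cos(\alpha+\tfrac{2\pi j}{r})\bigr)=2^{1-r}\bigl(T_r(t)-\cos(r\alpha)\bigr)$ one gets $\partial_{x_0}h = r\prod_{k=1}^{r-1}\bigl(x_0-\mu\cos(\tfrac{k\pi}{r})\bigr)$ where $\mu$ is the scaling of the $(x_1,x_2)$-part times $\sqrt{x_1^2+x_2^2}$; the extreme roots $k=1,r-1$ give the factor $x_0^2-x_1^2-x_2^2$ precisely when $\mu=\tfrac{\sqrt{x_1^2+x_2^2}}{\cos(\pi/r)}$, i.e.\ in the inscribed case. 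With your tangent forms the roots are $\sqrt{s}\,\cos(\tfrac{k\pi}{r})$ and \emph{all} $\lambda_k=1/\cos^2(\tfrac{k\pi}{r})$ are strictly greater than $1$, so $x_0^2-x_1^2-x_2^2$ is never a factor and the last assertion of the proposition would be false.

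Second, several steps rely on facts you assert but do not establish and which are actually false as stated. You write ``I anticipate an identity of the shape $h(x_0,x_1,x_2)=f_r(x_0,\,x_0^2-x_1^2-x_2^2)$,'' and later that $h$ ``should equal $x_0\cdot(\text{product of quadratics})$ or a sum of two such products.'' All such expressions are $SO(2)$-invariant, but $h$ itself is only $\mathbb{Z}/r$-invariant (the explicit formula above shows $h$ contains the term $\rho^r\cos\bigl(r(\phi-\theta_0)\bigr)$, which genuinely depends on the angle). What \emph{is} true, and is the real mechanism, is that $\partial_{x_0}h$ is $SO(2)$-invariant: the only $\mathbb{Z}/r$-invariants of degree $r$ that are not already $SO(2)$-invariant involve $\mathrm{Re}/\mathrm{Im}\,(x_1+ix_2)^r$, which are independent of $x_0$ and hence killed by $\partial_{x_0}$. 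Without isolating this you cannot legitimately ``restrict to $s=1$ and view $g(x_0):=\partial_{x_0}h|_{s=1}$ as a univariate polynomial in $x_0$'': the restriction of $h$ to $s=1$ still depends on $\phi$. Relatedly, your claim in step~(a) that ``for $r$ odd $h|_{s=1}$ is odd in $x_0$, so its $x_0$-derivative is even'' is incorrect: $h|_{s=1}=\prod_j\bigl(x_0-c\cos(\phi-\theta_j)\bigr)$ is a monic polynomial whose roots are generically not symmetric about $0$ (there are $r$ of them and $r$ is odd), so it has no parity in $x_0$; the even symmetry of the derivative comes only after the $\phi$-dependent part has been removed.

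If you correct the factor $1/\cos(\pi/r)$ in the $\ell_j$, carry out the Chebyshev identity to get $\partial_{x_0}h=r\prod_{k=1}^{r-1}\bigl(x_0-\tfrac{\cos(k\pi/r)}{\cos(\pi/r)}\sqrt{x_1^2+x_2^2}\bigr)$, pair $k$ with $r-k$, and observe that $\cos(\tfrac{k\pi}{r})\ne 0$ for $1\leqslant k\leqslant r-1$ because $r$ is odd, you do obtain a complete and rather explicit proof, with $\lambda_k=\cos^2(\tfrac{\pi}{r})/\cos^2(\tfrac{k\pi}{r})\geqslant 1$ and equality at $k\in\{1,r-1\}$. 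But the draft as written does not yet prove the statement.
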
 
\begin{proof}
 Since $h$ is invariant under rotations of angle $2\pi/r$ around the $x_0$-axis, it follows that
$q:=\frac{\partial}{\partial x_0}h$ has the same invariance. The space of homogeneous polynomials of degree $r-1$ that are invariant under this rotation has dimension $\frac{r+1}{2}$. This can be computed by elementary means, and also by Molien's theorem \cite{mol} (see for example also \cite{stu}).  Now whenever two $\ell_i$ have a common zero, the polynomial $q$ vanishes there as well. This gives additional $\frac{r-1}{2}$ many independent conditions on $q$, as one easily checks. So the space of all invariant polynomials of degree $r-1$ that vanish at these points is one-dimensional. Obviously, a suitable product of quadratic polynomials (as stated in the formulation of this proposition) belongs to this space. This proves the claim.
\end{proof}
\begin{figure}\includegraphics[scale=0.5]{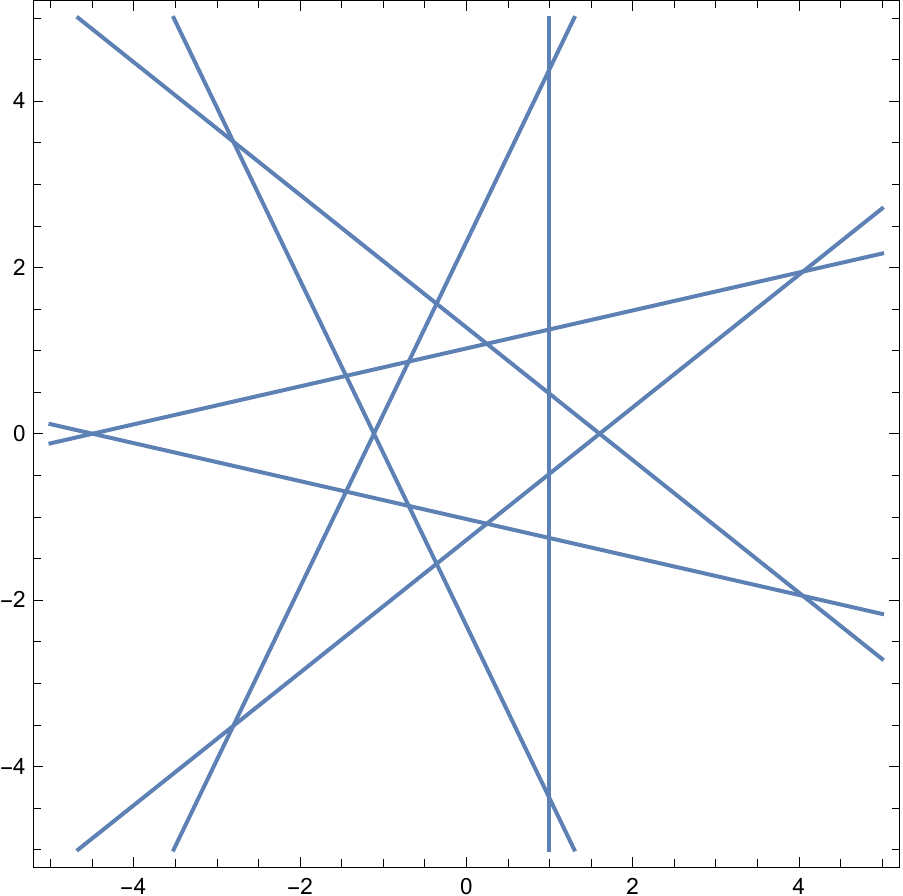}
\includegraphics[scale=0.5]{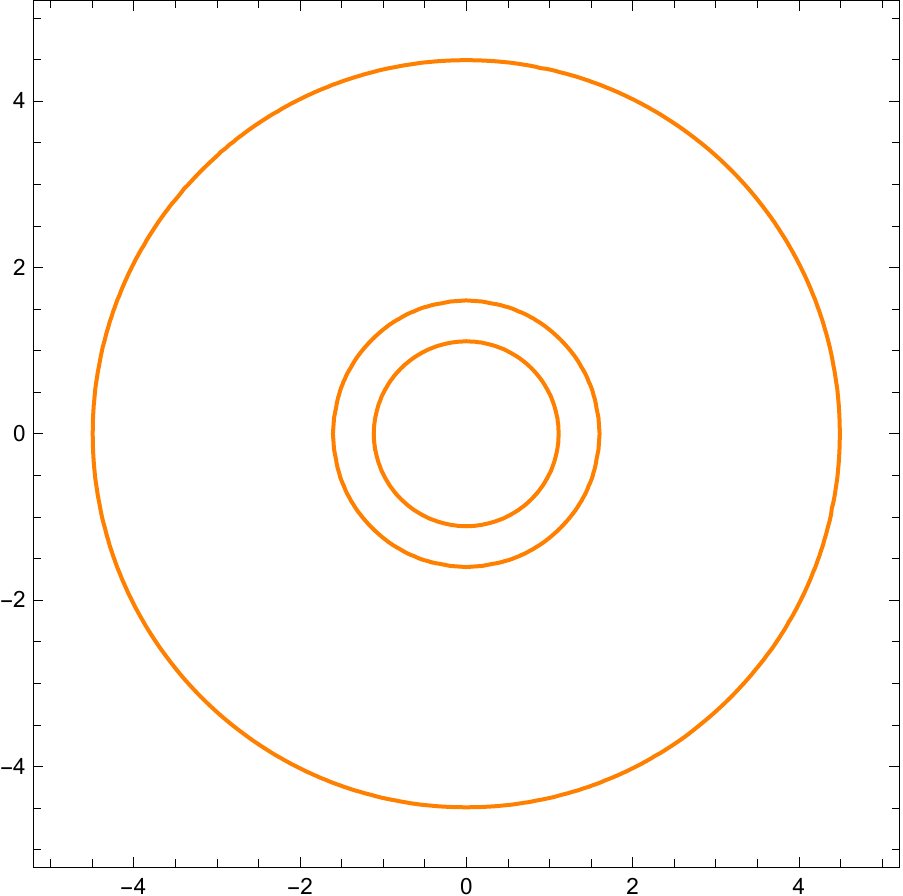}
\includegraphics[scale=0.5]{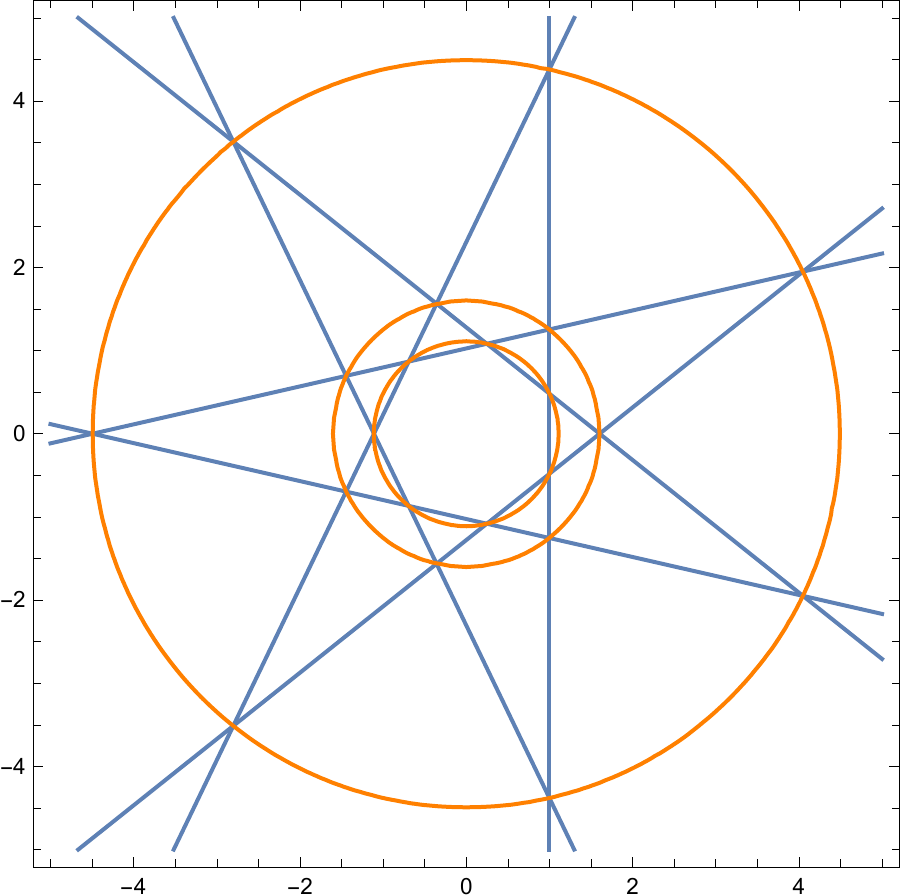}
\caption{From left to right: real zero set of $h, \frac{\partial}{\partial x_0}h$ and both combined, drawn on the affine section defined by $x_0=1$.}
\label{fig:ren}
\end{figure}

The following result is a strengthening of \Cref{thm:gencon} for the circular cone.
\begin{theorem}$(i)$ The union of all free spectrahedra over  $D$ covers the interior of $\mathcal C(D)^{\rm lrg}$.   $\mathcal C(D)^{\rm sml}$ is the intersection of all free numerical ranges over $D$.

$(ii)$ There is a point in $\mathcal C(D)^{\rm lrg}$ which is not contained in any free spectrahedron over $D$.

$(iii)$  It is \emph{not} the case that there exist finitely many free spectrahedra over $D$ which  contain all the other ones.
\end{theorem}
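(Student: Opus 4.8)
The plan is to obtain $(iii)$ cheaply from $(i)$ and $(ii)$, so the real substance is in those two parts. For $(iii)$: suppose, towards a contradiction, that $\mathcal S^{(1)},\dots,\mathcal S^{(N)}$ are free spectrahedra over $D$ such that every free spectrahedron over $D$ is contained in one of them. Then $\bigcup_{k=1}^N\mathcal S^{(k)}$ contains every free spectrahedron over $D$, so by $(i)$ it covers the interior of $\mathcal C(D)^{\rm lrg}$ at each level $s$. Since every $\mathcal C(D)^{\rm lrg}_s$ is a proper cone and a finite union of closed sets is closed, passing to closures gives $\mathcal C(D)^{\rm lrg}_s=\bigcup_{k=1}^N\mathcal S^{(k)}_s$ for all $s$. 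But $(ii)$ supplies a point of $\mathcal C(D)^{\rm lrg}$ lying in no free spectrahedron over $D$, hence in none of the $\mathcal S^{(k)}$ — a contradiction. Note that $(ii)$ is genuinely needed here: a proper cone can well be a finite union of strictly smaller closed subcones, so $(i)$ alone does not suffice.

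For $(ii)$ I would exhibit the pair $(I_2,M_1,M_2)$ — the very tuple generating $\mathcal C(D)^{\rm sml}$, now viewed as an element of ${\rm Her}_2(\mathbb C)^3$. It lies in $\mathcal C(D)^{\rm lrg}_2$ (indeed on its boundary) because $M_1+iM_2$ is nilpotent of operator norm $2$, hence of numerical radius $1$, so $(v^*M_1v)^2+(v^*M_2v)^2=|v^*(M_1+iM_2)v|^2\leqslant 1$ for every unit vector $v$. On the other hand, any free spectrahedron over $D$ is, after a congruence, of the form $\mathcal S(I_n,N_1,N_2)$: the unit $(1,0,0)$ lies in the interior of its level-one set, which forces the $x_0$-coefficient to be positive definite, and then — as in the proof of \Cref{thm:circcone}, or by a short polar-duality computation — the level-one set equals $D$ exactly when the numerical range of $N:=N_1+iN_2$ is the closed unit disk. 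Diagonalising the $2\times 2$ nilpotent $M_1+iM_2$ turns the condition $(I_2,M_1,M_2)\in\mathcal S_2(I_n,N_1,N_2)$ into positivity of the $2\times 2$ block matrix with diagonal blocks $I_n$ and off-diagonal blocks $N,N^*$, i.e.\ into $\|N\|\leqslant 1$. But if $\|N\|\leqslant 1$, equality in Cauchy--Schwarz forces every point of the unit circle to be an eigenvalue of the finite matrix $N$, contradicting that its numerical range is the closed unit disk. Hence $(I_2,M_1,M_2)$ lies in no free spectrahedron over $D$.

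For $(i)$, the essential step is to strengthen \Cref{thm:gencon} from free spectrahedral shadows to genuine finite-dimensional free spectrahedra over $D$, and this is exactly what \Cref{prop:deriv} is designed for. For each odd $r$, inscribe the regular $r$-gon cone $P_r$ with all extreme rays on $\partial D$ and form $h=\ell_1\cdots\ell_r$. By \Cref{prop:deriv}, $\frac{\partial}{\partial x_0}h$ factors into circular-cone forms $\lambda_jx_0^2-x_1^2-x_2^2$ with $\lambda_j\geqslant 1$, with $x_0^2-x_1^2-x_2^2$ among them; hence the hyperbolicity cone of $\frac{\partial}{\partial x_0}h$ in the direction $x_0$ is $\bigcap_j\{x_0\geqslant 0,\ \lambda_jx_0^2\geqslant x_1^2+x_2^2\}=D$, since $1$ is the smallest $\lambda_j$. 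It is precisely this attainment of $\lambda=1$ that makes the associated pencil have level-one set equal to $D$ rather than a larger circular cone, so the pencil defines a genuine free spectrahedron $\mathcal R_r$ over $D$. I would then show that the $\mathcal R_r$ exhaust the interior of $\mathcal C(D)^{\rm lrg}$ — most cleanly by arranging them to be nested and proving $\overline{\bigcup_r\mathcal R_r}=\mathcal C(D)^{\rm lrg}$, after which the convex-geometric fact that a dense convex subset of a full-dimensional closed convex set contains its interior finishes the first claim of $(i)$. The second claim of $(i)$ follows by free duality (\Cref{thm:dual}): the map $\mathcal S\mapsto\mathcal S^{\vee_{\rm fr}}$ identifies free spectrahedra over $D$ with free numerical ranges over $D$ (using $D=D^\vee$ and that both classes lie between $\mathcal C(D)^{\rm sml}$ and $\mathcal C(D)^{\rm lrg}$), so $\mathcal C(D)^{\rm lrg}=\overline{\bigcup_{\mathcal S}\mathcal S}$ dualises to $\mathcal C(D)^{\rm sml}=\bigcap_{\mathcal W}\mathcal W$.

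The hard part will be the covering statement in $(i)$ — not the production of finite-dimensional free spectrahedra over $D$ (that is \Cref{prop:deriv} together with a determinantal representation), but the proof that the polygon family $\{\mathcal R_r\}$ is monotone and actually fills out the whole interior of $\mathcal C(D)^{\rm lrg}$, equivalently that $\bigcap_r\mathcal R_r^{\vee_{\rm fr}}=\mathcal C(D)^{\rm sml}$. This cannot be read off from \Cref{thm:gencon} alone: the free spectrahedral shadows over $D$ that theorem provides need not sit inside any free spectrahedron over $D$ — for instance $\mathcal C(D)^{\rm lrg}$ is itself such a shadow — so the strengthening genuinely requires the explicit construction. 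Once $(i)$ is established, the point in $(ii)$ and the topological reduction of $(iii)$ are short.
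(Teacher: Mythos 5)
Your treatment of $(ii)$ and $(iii)$ is sound. For $(iii)$ you reduce to $(i)$ and $(ii)$ exactly as the paper does, and you correctly note that $(ii)$ is genuinely needed (a proper convex cone \emph{can} be a finite union of strictly smaller closed subcones, e.g.\ $\mathbb{R}_{\geqslant 0}^2$ split along a ray). For $(ii)$ you use a different witness than the paper: the paper exhibits the concrete tuple $\bigl(I_2,\ \mathrm{diag}(0,1),\ \tfrac{1}{\sqrt2}(E_{12}+E_{21})\bigr)$ and derives a contradiction by a hands-on matrix manipulation with a parameter $\lambda$; you instead take the generator $(I_2,M_1,M_2)$ of $\mathcal C(D)^{\rm sml}$ and argue that any defining tuple $(I_n,N_1,N_2)$ for $D$ must have $W(N_1+iN_2)=\overline{\mathbb D}$, while $(I_2,M_1,M_2)\in\mathcal S_2(I_n,N_1,N_2)$ forces $\|N_1+iN_2\|\leqslant 1$, and then every boundary point of the disk would be an eigenvalue of a finite matrix. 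This is a clean and conceptually transparent alternative; it relies on the same normalization lemma ($M_0$ can be taken to be $I_n$) that the paper cites from \cite{rago,nepl}.

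The gap is in $(i)$. You correctly identify that \Cref{thm:gencon} only yields free spectrahedral \emph{shadows} over $D$, that \Cref{prop:deriv} together with a determinantal representation of $\partial_{x_0}h$ produces honest free spectrahedra $\mathcal R_r=\mathcal S(I_{r-1},N_1,N_2)$ over $D$, and that the remaining task is to show these $\mathcal R_r$ fill out the interior of $\mathcal C(D)^{\rm lrg}$. But you then only gesture at this ("I would show \ldots most cleanly by arranging them to be nested and proving $\overline{\bigcup_r\mathcal R_r}=\mathcal C(D)^{\rm lrg}$") without actually doing it — nestedness of the $\mathcal R_r$ is not obvious and is in fact not needed. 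The paper's key step, which you omit, is that the matrices $N_i$ in Sanyal's determinantal representation arise as compressions of the diagonal matrices $L_i$ defining the polygon $P=P_r$, so that at every level
\begin{equation*}
\mathcal C(P_r)^{\rm lrg}=\mathcal S(I_r,L_1,L_2)\subseteq\mathcal S(I_{r-1},N_1,N_2)=\mathcal R_r\subseteq\mathcal C(D)^{\rm lrg}.
\end{equation*}
Once this inclusion is in hand, the covering reduces to the elementary observation that every element of $\mathrm{int}\,\mathcal C(D)^{\rm lrg}_s$ lies in $\mathcal C(P_r)^{\rm lrg}_s$ for $r$ large enough, because an interior tuple $A$ has $(v^*A_0v,v^*A_1v,v^*A_2v)$ uniformly bounded away from $\partial D$ over unit vectors $v$, and $P_r\to D$. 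Without the intermediate inclusion $\mathcal C(P_r)^{\rm lrg}\subseteq\mathcal R_r$ you only know the $\mathcal R_r$ are \emph{some} free spectrahedra over $D$, which is not enough to conclude covering; this is exactly the point you flagged as "the hard part" but did not close. The second claim of $(i)$, by free duality, is fine once the first is proved.
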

\begin{proof} ($i$) For each odd $r\geqslant 3$, the polyhedral cone $P$ from above is a spectrahedron, defined by the diagonal linear matrix inequality $$0\leqslant\left(\begin{array}{ccc}\ell_1(x_0,x_1,x_2) &  &  \\ & \ddots &  \\ &  & \ell_r(x_0,x_1,x_2)\end{array}\right)=x_0I_r +x_1L_1+x_2L_2,$$ and the same matrix inequality defines its maximal abstract operator system: $$\mathcal C(P)^{\rm lrg}=\mathcal S(I_r,L_1,L_2).$$ This is the easy direction of  \cite[Theorem 3.2]{fritz}, a direct observation. By \cite{san}, for $h=\ell_1\cdots\ell_r$ we know that $$\frac{\partial}{\partial x_0}h=r\cdot {\rm det}\left(x_0I_{r-1}+x_1N_1+x_2N_2\right)$$ with  $$N_i=\left(\begin{array}{c|c}I_{r-1} & 1\end{array}\right) L_i \left(\begin{array}{c}I_{r-1} \\1\end{array}\right)\in{\rm Her}_{r-1}(\mathbb C).$$  By \Cref{prop:deriv},  $\frac{\partial}{\partial x_0}h$ vanishes on $\partial D,$  but not in the interior of $D$. This implies  $$\mathcal S_1(I_{r-1}, N_1,N_2)=D$$ (see for example \cite{nepl} for more information on determinantal representations and spectrahedra). Since the $N_i$ arise as compressions of the $L_i,$ we have $$\mathcal C(P)^{\rm lrg}=\mathcal S(I_r,L_1,L_2)\subseteq \mathcal S(I_{r-1},N_1,N_2) \subseteq \mathcal C(D)^{\rm lrg}.$$ 
It is easy to see that each element from the interior of $\mathcal C(D)^{\rm lrg}$ already lies in $\mathcal C(P)^{\rm lrg}$ for some large enough $r$. This proves the first claim, and the statement about  $\mathcal C(D)^{\rm sml}$ follows from free duality. 

For  ($ii$)  consider the hermitian matrix tuple $$(A_0,A_1,A_2):=\left(\left(\begin{array}{cc}1 & 0 \\0 & 1\end{array}\right), \left(\begin{array}{cc}0 & 0 \\0 & 1\end{array}\right),\left(\begin{array}{cc}0 & 1/\sqrt{2} \\ 1/\sqrt{2} & 0\end{array}\right)\right)\in \mathcal C(D)_2^{\rm lrg}.$$ Now assume that $D=\mathcal S_1(I_n, N_1,N_2)$ for certain $N_1,N_2\in{\rm Her}_n(\mathbb C).$ Note that taking $I_n$ as our first matrix  is not a restriction, since  every defining linear matrix inequality for $D$ can be reduced to such a normalized one, see for example \cite{rago, nepl}. We can further  assume that $N_2$ is diagonal: $$N_2=\left(\begin{array}{ccc}d_1 & 0 & 0 \\0 & \ddots & 0 \\0 & 0 & d_n\end{array}\right).$$ Since $(1,1,\lambda)\in D$ if and only if $\lambda=0$, we obtain 
\begin{align}\label{eins}N_1+\left(\begin{array}{ccc}1+\lambda d_1 & 0 & 0 \\0 & \ddots & 0 \\0 & 0 & 1+\lambda  d_n\end{array}\right) \geqslant 0 \ \Leftrightarrow\ \lambda=0.\end{align}
Now assume that $$0\leqslant A_0\otimes I_n + A_1\otimes N_1+ A_2\otimes N_2=\left(\begin{array}{cc}I_n & \frac{1}{\sqrt{2}} N_2 \\ \frac{1}{\sqrt{2}}N_2 & I_n+N_1\end{array}\right)$$ holds, which is equivalent to \begin{align}\label{zwei}0\leqslant I_n +N_1-\frac{1}{2}N_2^2=N_1+\left(\begin{array}{ccc}1-\frac{1}{2}d_1^2 & 0 & 0 \\0 & \ddots & 0 \\0 & 0 & 1-\frac{1}{2}d_n^2\end{array}\right).\end{align} Consider the following equation: 
\begin{align*}&N_1+\left(\begin{array}{ccc}1+\lambda d_1 & 0 & 0 \\0 & \ddots & 0 \\0 & 0 & 1+\lambda d_n\end{array}\right) \\ & = \underbrace{N_1+ \left(\begin{array}{ccc}1-\frac{1}{2}d_1^2 & 0 & 0 \\0 & \ddots & 0 \\0 & 0 & 1-\frac{1}{2}d_n^2\end{array}\right)}_{\geqslant 0 \mbox{ by } (\ref{zwei})} +   \left(\begin{array}{ccc}\frac12 d_1^2+\lambda d_1 & 0 & 0 \\0 & \ddots & 0 \\0 & 0 & \frac12 d_n^2+\lambda d_n\end{array}\right).\end{align*}  If $\vert\lambda\vert$ is small, the matrix on the very right is positive semidefinite, which contradicts (\ref{eins}).

Finally, ($iii$) is a direct consequence of ($i$) and ($ii$).
\end{proof}

As explained above, we now obtain the following conclusion about linear matrix inequalities defining the circular cone.
\begin{corollary} There exists a sequence of linear matrix inequality definitions of the circular cone $D$,  of which none is a sum of compressions of the previous one.
\end{corollary}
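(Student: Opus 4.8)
The plan is to restate the claim as the existence of an infinite strictly increasing chain of free spectrahedra over $D$, and then to build such a chain out of the material already assembled in this section. Recall that by \Cref{thm:cont}, for Hermitian tuples $M,N$ with $\mathcal S_1(M)=\mathcal S_1(N)=D$, the statement ``$N$ is a sum of compressions of $M$'' is literally the same as the inclusion $\mathcal S(M)\subseteq\mathcal S(N)$. Hence it suffices to produce free spectrahedra $\mathcal S^{(1)}\subsetneq\mathcal S^{(2)}\subsetneq\cdots$, each with level one equal to $D$: for $k<\ell$ we then have $\mathcal S(N^{(\ell)})\not\subseteq\mathcal S(N^{(k)})$, so $N^{(k)}$ is not a sum of compressions of any later tuple $N^{(\ell)}$, and in particular none of these defining linear matrix inequalities of $D$ is a sum of compressions of a neighbouring one. (This is the ``weaker and weaker'' sequence announced in \Cref{ssec:class}: there ``none gives rise to the previous one'' is exactly this assertion read along an increasing chain.)

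For the construction I would reuse the free spectrahedra from the proof of the previous theorem. For each odd $r\geqslant 3$ let $P_r\subseteq D$ be the regular polyhedral cone with $r$ extreme rays on $\partial D$, and set $\mathcal S^{(r)}:=\mathcal S(I_{r-1},N_1^{(r)},N_2^{(r)})$, where $N_i^{(r)}$ are the compressions of the diagonal forms $L_i$ defining $P_r$, built via \Cref{prop:deriv} and \cite{san}; there we saw that $\mathcal C(P_r)^{\rm lrg}\subseteq\mathcal S^{(r)}\subseteq\mathcal C(D)^{\rm lrg}$ and that $\mathcal S^{(r)}$ has level one $D$. I would choose the polygons nested, e.g.\ along $r=3^k$, so that $P_3\subsetneq P_9\subsetneq\cdots$ (aligning one vertex, the vertex set of $P_{3^k}$ contains that of $P_{3^{k-1}}$). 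Then the cones $\mathcal C(P_{3^k})^{\rm lrg}$ strictly increase and, exactly as in the proof of the previous theorem, their union exhausts the interior of $\mathcal C(D)^{\rm lrg}$, hence has closure $\mathcal C(D)^{\rm lrg}$.

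Now the sequence $\bigl(\mathcal S^{(3^k)}\bigr)_k$ cannot be covered by finitely many free spectrahedra over $D$: if $\mathcal S^{(3^k)}\in\{\mathcal T_1,\dots,\mathcal T_m\}$ for all $k$, then $\mathcal C(D)^{\rm lrg}=\overline{\bigcup_k\mathcal C(P_{3^k})^{\rm lrg}}\subseteq\overline{\bigcup_k\mathcal S^{(3^k)}}\subseteq\mathcal T_1\cup\cdots\cup\mathcal T_m\subseteq\mathcal C(D)^{\rm lrg}$, so at each level a closed convex cone equals a finite union of closed convex subcones, which forces it to equal one of them; that would make $\mathcal C(D)^{\rm lrg}$ a free spectrahedron, contradicting the previous theorem. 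So $\{\mathcal S^{(3^k)}\}$ is an infinite sub-poset of the poset of free spectrahedra over $D$, and it contains no infinite strictly \emph{decreasing} chain, since a fixed free spectrahedron cannot contain the unboundedly growing $\mathcal C(P_{3^k})^{\rm lrg}$ for all $k$. A chain/antichain dichotomy then yields either an infinite strictly increasing subchain --- giving the desired sequence directly --- or an infinite antichain $\mathcal M_1,\mathcal M_2,\dots$, in which case consecutive members are incomparable, so again no $N^{(k+1)}$ is a sum of compressions of $N^{(k)}$ (now even literally as in the statement). Either way the corollary follows.

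The step I expect to be the main obstacle is the \emph{monotonisation}: the spectrahedra $\mathcal S^{(r)}$ produced by the derivative construction are only pinned between $\mathcal C(P_r)^{\rm lrg}$ and $\mathcal C(D)^{\rm lrg}$ and are not obviously nested in $r$, so one really has to invoke the non-existence of a finite dominating family (the previous theorem) plus a Dilworth-type extraction, as above. An alternative that would bypass the combinatorics is a greedy construction: given a free spectrahedron $\mathcal S$ over $D$, use that $\mathcal C(D)^{\rm lrg}$ is not a free spectrahedron to find a strictly larger free spectrahedron over $D$; carrying this out would require knowing that free spectrahedra over $D$ are upward directed (any two are contained in a common third one over $D$), which is the point I would want to verify carefully.
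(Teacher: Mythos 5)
Your overall plan is a genuine variant of the paper's: the paper derives the corollary directly from part~$(iii)$ of the preceding theorem, by inductively picking, given any finite family $\mathcal S^{(1)},\dots,\mathcal S^{(k)}$, a free spectrahedron $\mathcal S^{(k+1)}$ over $D$ not contained in any of them, which immediately yields $\mathcal S^{(k+1)}\not\subseteq\mathcal S^{(k)}$ for all $k$. You instead re-enter the construction from the proof of the theorem (the family $\mathcal S^{(r)}$ obtained from regular polygons) and propose to extract a chain or antichain with a Ramsey/Dilworth argument. This is a legitimate alternative route, and several of its steps are fine: the nesting $P_3\subseteq P_9\subseteq\cdots$ does yield $\mathcal C(P_{3^{k-1}})^{\rm lrg}\subseteq\mathcal C(P_{3^k})^{\rm lrg}$; a single free spectrahedron over $D$ cannot contain all $\mathcal C(P_{3^k})^{\rm lrg}$, since being closed it would then contain $\mathcal C(D)^{\rm lrg}$, which is not a free spectrahedron, so no infinite decreasing chain in your family; and the infinite Ramsey dichotomy then gives an infinite increasing chain or an infinite antichain, either of which suffices after correctly interpreting the ``previous one'' phrasing, as you do. You also correctly locate the directedness question as the reason one cannot simply run a greedy construction.

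However, there is one concrete gap in your argument. You claim that if at each level a closed convex cone equals a finite union of closed convex subcones, it must equal one of them. This is false: the cone $\mathbb R_{\geqslant 0}^2$ is the union of the two closed convex subcones $\{x\geqslant y\geqslant 0\}$ and $\{y\geqslant x\geqslant 0\}$, neither of which is all of $\mathbb R_{\geqslant 0}^2$. So the step ``that would make $\mathcal C(D)^{\rm lrg}$ a free spectrahedron'' does not follow as written. The correct way to close this gap uses what you did not invoke: part $(ii)$ of the preceding theorem exhibits an explicit $(A_0,A_1,A_2)\in\mathcal C(D)_2^{\rm lrg}$ lying in no free spectrahedron over $D$, and since you have shown $\mathcal C(D)^{\rm lrg}\subseteq\mathcal T_1\cup\cdots\cup\mathcal T_m$, this point would have to lie in some $\mathcal T_i$, a contradiction. (Alternatively, a pigeonhole argument works: infinitely many indices $k$ share the same $\mathcal T_i$; since the $\mathcal C(P_{3^k})^{\rm lrg}$ are nested and their union is dense in $\mathcal C(D)^{\rm lrg}$, that single closed $\mathcal T_i$ would contain $\mathcal C(D)^{\rm lrg}$.) With either fix, your route goes through; but it is a detour compared to the paper's one-line deduction from part~$(iii)$, and the false covering lemma as stated is the step you would need to replace.
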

\subsection{The Cone of Positive Semidefinite Matrices}\label{ssec:psd}
Throughout this section we consider the space $\mathcal V={\rm Mat}_d(\mathbb C)$ with the usual involution, so that \begin{align*}{\rm Mat}_s(\mathcal V)&={\rm Mat}_s(\mathbb C)\otimes{\rm Mat}_d(\mathbb C)={\rm Mat}_{sd}(\mathbb C) \\{\rm Her}_s(\mathcal V)&={\rm Her}_s(\mathbb C)\otimes{\rm Her}_d(\mathbb C)={\rm Her}_{sd}(\mathbb C).\end{align*} We will always use the cone $$P_d:=\{A\in {\rm Her}_d(\mathbb C)\mid A\geqslant 0\}$$ of positive semidefinite matrices as cone at level one. Then $\mathcal C(P_d)^{\rm sml}$ consists of so-called {\it separable} matrices, and  $\mathcal C(P_d)^{\rm lrg}$ of {\it block-positive} matrices. Neither of the two is a free spectrahedron or a free numerical range, see for example \cite{bene} for more detailed explanations. There are two particularly interesting abstract operator systems $${\rm Psd}_d=\left( {\rm Psd}_{d,s}\right)_{s\geqslant 1}\ \mbox{ and }\ {\rm Psd}^\Gamma_d=\left( {\rm Psd}_{d,s}^\Gamma\right)_{s\geqslant 1}$$ over $P_d$, defined by \begin{align*}{\rm Psd}_{d,s}&:=\left\{ A\in {\rm Her}_s(\mathbb C)\otimes{\rm Her}_d(\mathbb C)\mid A\geqslant 0\right\}\\
{\rm Psd}^\Gamma_{d,s}&:=\left\{ \sum_i A_i\otimes B_i\in {\rm Her}_s(\mathbb C)\otimes{\rm Her}_d(\mathbb C)\mid \sum_i A_i\otimes B_i^T\geqslant 0\right\}.\end{align*}
${\rm Psd}_d$ is called the system of positive matrices, and ${\rm Psd}^\Gamma_d$ the system of matrices with positive partial transpose.
Both  are easily seen to be free spectrahedra,  defined by linear matrix inequalities of size $d$. In \cite{bene} it was shown that they are free dual to each other (note the slight difference in the definition of the free dual there!). In particular, both are also free numerical ranges, generated by elements belonging to their $d$-th level. Among  free spectrahedra defined by linear  matrix inequalities of relatively small size, they are in fact the only maximal ones.

\begin{theorem}\label{thm:psd}
Let $\mathcal C$ be an abstract operator system over $P_d$. 

$(i)$ If $\mathcal C$ is a free spectrahedron, defined by a linear matrix inequality of size at most $2d-2$, then $$\mathcal C\subseteq {\rm Psd}_d\ \mbox{ or }\ \mathcal C\subseteq {\rm Psd}_d^\Gamma.$$

$(ii)$ If $\mathcal C$ is a free numerical range, generated by an element from $\mathcal C_{2d-2}$,  then  $${\rm Psd}_d\subseteq \mathcal C\ \mbox{ or } \ {\rm Psd}_d^\Gamma\subseteq \mathcal C.$$
\end{theorem}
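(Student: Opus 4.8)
The plan is to prove $(i)$ directly, recast it as a rigidity statement about ``small'' isometries, and then deduce $(ii)$ from $(i)$ by free duality.

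\textbf{Setup and reduction.} Since $\mathcal C$ is a free spectrahedron over $P_d$ with a realization of size $n\le 2d-2$, and every defining linear matrix inequality of $P_d$ can be normalized (as in the circular cone discussion, cf.\ \cite{rago,nepl}), I would first write $\mathcal C=\mathcal S(\varphi)$ for a unital $*$-linear map $\varphi\colon{\rm Mat}_d(\mathbb C)\to{\rm Mat}_n(\mathbb C)$ with $\varphi(I_d)=I_n$. The hypothesis $\mathcal C_1=P_d$ says precisely that $\varphi$ is a unital order embedding at level one, i.e.\ $A\ge 0\Leftrightarrow\varphi(A)\ge 0$ for $A\in{\rm Her}_d(\mathbb C)$; in particular $\varphi$ is injective, so $d\le n$, and $\varphi$ is a unital $*$-isometry onto its range $\mathcal R:=\varphi({\rm Mat}_d(\mathbb C))\subseteq{\rm Mat}_n(\mathbb C)$. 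Applying \Cref{thm:cont} to the realizations $\varphi$ of $\mathcal C$ and $B\mapsto B$ of ${\rm Psd}_d$ (respectively $B\mapsto B^T$ of ${\rm Psd}_d^\Gamma$), one gets that $\mathcal C\subseteq{\rm Psd}_d$ iff there is a unital completely positive $\Phi\colon{\rm Mat}_n(\mathbb C)\to{\rm Mat}_d(\mathbb C)$ with $\Phi\circ\varphi={\rm id}$, and $\mathcal C\subseteq{\rm Psd}_d^\Gamma$ iff there is such a $\Phi$ with $\Phi(\varphi(B))=B^T$. By the Arveson extension theorem (using injectivity of ${\rm Mat}_d(\mathbb C)$), such a $\Phi$ exists as soon as $\varphi^{-1}$, respectively $B\mapsto(\varphi^{-1}(B))^T$, is completely positive on $\mathcal R$. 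Thus $(i)$ is equivalent to the statement: \emph{a unital $*$-isometry $\varphi\colon{\rm Mat}_d(\mathbb C)\to{\rm Mat}_n(\mathbb C)$ with $n\le 2d-2$ is, after possibly composing with transposition, a complete order embedding.}

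\textbf{Base case and structure of the range.} If $n=d$, this is immediate: $\varphi$ is then bijective, so $\varphi^{-1}$ is a unital order isomorphism of ${\rm Mat}_d(\mathbb C)$, hence a Jordan isomorphism (Kadison), hence $X\mapsto U^*XU$ or $X\mapsto U^*X^TU$ for a unitary $U$; so $\mathcal C={\rm Psd}_d$ or $\mathcal C={\rm Psd}_d^\Gamma$. For $d<n\le 2d-2$ I would analyze $\mathcal R$. It is a $d^2$-dimensional operator system whose positive cone $\mathcal R\cap P_n$ is carried isomorphically onto $P_d$ by $\varphi$, so its extreme rays are exactly the images $\varphi(vv^*)$, $v\in\mathbb C^d$. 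Writing $W_v:={\rm ran}\,\varphi(vv^*)$ and $k_v:=\dim W_v$, the matrix $\varphi(vv^*)$ lies in the relative interior of the face $\{X\ge 0:{\rm ran}\,X\subseteq W_v\}$ of $P_n$; since this ray is extreme in $\mathcal R\cap P_n$, the intersection of $\mathcal R$ with the real span of that face is one-dimensional, and comparing real dimensions inside ${\rm Her}_n(\mathbb C)$ forces $k_v^2\le n^2-d^2+1$. Together with the corresponding constraints coming from the facets $\varphi(\{A\ge 0:u^*Au=0\})\cong P_{d-1}$ of $\mathcal R\cap P_n$, the classification of unital $*$-isometries between matrix algebras of small size --- the external input the paper invokes --- says that when $n\le 2d-2$ this rigidly forces the matrix order of $\mathcal R$ to agree with that of ${\rm Mat}_d(\mathbb C)$ or of its opposite, i.e.\ $\varphi$ or $B\mapsto\varphi(B)^T$ is a complete order embedding. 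This yields $(i)$.

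\textbf{Duality and the main obstacle.} For $(ii)$, let $\mathcal C$ be a free numerical range over $P_d$ generated by an element at level $2d-2$. By \Cref{thm:dual}, $\mathcal C^{\vee_{\rm fr}}$ is then a free spectrahedron defined by a linear matrix inequality of size $2d-2$, with level-one cone $P_d^\vee=P_d$, so $(i)$ applies: $\mathcal C^{\vee_{\rm fr}}\subseteq{\rm Psd}_d$ or $\mathcal C^{\vee_{\rm fr}}\subseteq{\rm Psd}_d^\Gamma$. Taking free duals reverses inclusions, and using $(\mathcal C^{\vee_{\rm fr}})^{\vee_{\rm fr}}=\mathcal C$ together with the fact that ${\rm Psd}_d$ and ${\rm Psd}_d^\Gamma$ are free dual to each other, this becomes ${\rm Psd}_d^\Gamma\subseteq\mathcal C$ or ${\rm Psd}_d\subseteq\mathcal C$, which is $(ii)$. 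The hard part is the rigidity step: every reformulation of ``$\mathcal C\subseteq{\rm Psd}_d$'' via \Cref{thm:cont} and \Cref{thm:dual} (an intertwining UCP map, complete positivity of $\varphi^{-1}$, and so on) is logically equivalent to the conclusion, so the bound $n\le 2d-2$ must genuinely be exploited through the geometry of $\mathcal R$ inside ${\rm Mat}_n(\mathbb C)$ --- equivalently, through the classification of small unital $*$-isometries of matrix algebras --- with the case $n=d$ being exactly Kadison's description of unital order isomorphisms of ${\rm Mat}_d(\mathbb C)$.
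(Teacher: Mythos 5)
Your proposal is correct and, once the exploratory dimension-counting is set aside, takes essentially the same route as the paper: normalize to a unital $*$-isometry $\varphi\colon{\rm Mat}_d(\mathbb C)\to{\rm Mat}_n(\mathbb C)$ with $n\le 2d-2$, invoke the external classification \cite[Theorem~2.3]{chlipo} of such small unital $*$-isometries to get the block form $A\oplus\psi(A)$ or $A^T\oplus\psi(A)$ up to unitary conjugation, and derive $(ii)$ from $(i)$ by free duality using $P_d^\vee=P_d$ and ${\rm Psd}_d^{\vee_{\rm fr}}={\rm Psd}_d^\Gamma$. The reformulation via \Cref{thm:cont} and Arveson, and the rank bound $k_v^2\le n^2-d^2+1$, are correct observations but become superfluous once that structure theorem is cited, which is what the paper does directly.
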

\begin{proof}
($i$) By assumption, there is a concrete realization $$\varphi\colon{\rm Mat}_d(\mathbb C)\to{\rm Mat}_r(\mathbb C)$$ of $\mathcal C$, with $r\leqslant 2d-2$. As explained above, we can assume $\varphi(I_d)=I_r.$ From $\mathcal C_1=P_d$ we see that $$ A\geqslant 0 \ \Leftrightarrow\ \varphi(A)\geqslant 0 $$ holds for all $A\in{\rm Her}_d(\mathbb C)$. By \cite[Theorem 2.3]{chlipo} there exists a unitary $U\in{\rm Mat}_r(\mathbb C)$ and  a unital positive $*$-linear map $\psi\colon {\rm Mat}_d(\mathbb C)\to {\rm Mat}_{r-d}(\mathbb C),$ such that either $$U^*\varphi(A)U=\left(\begin{array}{cc}A & 0 \\0 & \psi(A)\end{array}\right)$$ or $$U^*\varphi(A)U=\left(\begin{array}{cc}A^{T} & 0 \\0 & \psi(A)\end{array}\right)$$ holds for all $A\in{\rm Mat}_d(\mathbb C)$. In the first case we have $\mathcal C\subseteq {\rm Psd}_d,$ in the second $\mathcal C\subseteq {\rm Psd}_d^\Gamma.$ 
($ii$) follows from ($i$) by free duality.
\end{proof}

A linear matrix inequality definition of size $r$ for $P_d$ is the same as a $*$-linear map $$\varphi\colon {\rm Mat}_d(\mathbb C)\to{\rm Mat}_r(\mathbb C)$$ with $$A\geqslant 0 \ \Leftrightarrow\ \varphi(A)\geqslant 0$$ for all $A\in{\rm Mat}_d(\mathbb C)$. After possibly splitting off zero blocks, we can further assume $\varphi(I_d)=I_r.$ In \cite{chlipo} it was proven that this is equivalent to $\varphi$ being a {\it unital  $*$-isometry} with respect to the operator/spectral norm on the matrix algebras. So classifying linear matrix inequalities defining  $P_d$ is the same as classifying unital $*$-isometries between matrix algebras. 

Some  isometries are easy, meaning  there exists a unitary $U\in{\rm Mat}_r(\mathbb C)$ such that $$U^*\varphi(A)U= \left(\begin{array}{cc}A & 0 \\0 & \psi(A)\end{array}\right)\quad \mbox{  or }\quad U^*\varphi(A)U=\left(\begin{array}{cc}A^T & 0 \\0 & \psi(A)\end{array}\right)$$ holds for all $A\in{\rm Mat}_d(\mathbb C)$. All easy isometries define free spectrahedra contained in either  ${\rm Psd}_d$ or ${\rm Psd}_d^\Gamma.$ Theorem 2.3 from \cite{chlipo}, that we have used above, says that for $r\leqslant 2d-2$, all isometries are easy, but this fails for $r\geqslant 2d-1$. The following result provides alternative characterization of easy isometries.

\begin{theorem}\label{thm:easy} For a unital $*$-isometry $\varphi\colon {\rm Mat}_d(\mathbb C)\to{\rm Mat}_r(\mathbb C),$ the following are equivalent: 
\begin{itemize}
\item [$(i)$] The free spectrahedron defined by $\varphi$ is contained in ${\rm Psd}_d$ or ${\rm Psd}_d^\Gamma$.
\item[$(ii)$] There exists $V\in{\rm Mat}_{r,d}(\mathbb C),$ such that either $$V^*\varphi(A)V=A\ \mbox{ or } \ V^*\varphi(A)V=A^T$$ holds for all $A\in{\rm Mat}_d(\mathbb C)$.
\item[$(iii)$] $\varphi$ is easy, i.e.\ there exists a unitary $U\in{\rm Mat}_{r}(\mathbb C),$ such that either $$U^*\varphi(A)U=\left(\begin{array}{cc}A & 0 \\0 & \psi(A)\end{array}\right)\ \mbox{ or } \ U^*\varphi(A)U=\left(\begin{array}{cc}A^{T} & 0 \\0 & \psi(A)\end{array}\right)$$ holds for all $A\in{\rm Mat}_d(\mathbb C)$
\item[$(iv)$] There are unit vectors $h_1,\ldots, h_d\in\mathbb C^r$ such that either $$\varphi(E_{ij})h_s=\delta_{js}\cdot h_i\ \mbox{ or }\ \varphi(E_{ij})h_s=\delta_{is}\cdot h_j$$ holds for all $i,j,s.$
\end{itemize} 
\end{theorem}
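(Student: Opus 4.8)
The plan is to establish the cycle $(iii)\Rightarrow(i)\Rightarrow(ii)\Rightarrow(iii)$ together with $(ii)\Leftrightarrow(iv)$. First I would remove the ``transpose'' alternatives by a symmetry argument: the transpose map $T$ on ${\rm Mat}_d(\mathbb C)$ is itself a unital $*$-isometry, and passing to $\varphi\circ T$ (i.e.\ substituting $A\mapsto A^T$, together with the fact that $\mathrm{id}_s\otimes T$ is an involution and that ${\rm Psd}_d=\mathcal S({\rm id})$, ${\rm Psd}_d^\Gamma=\mathcal S(T)$) shows that $\varphi$ satisfies the transpose alternative of $(i)$ (resp.\ of $(ii),(iii),(iv)$) exactly when $\varphi\circ T$ satisfies the non-transpose alternative of that statement. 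Since each of $(i)$--$(iv)$ is the disjunction of its two alternatives, it then suffices to prove that the four non-transpose statements are equivalent for an arbitrary unital $*$-isometry, and to apply this to $\varphi$ and to $\varphi\circ T$. From here on I work only with the non-transpose alternatives, using that $\varphi$ is unital, $*$-linear and positive, and that every matrix is a $\mathbb C$-linear combination of orthogonal projections.

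The computational core I would isolate is the elementary fact: if $0\leqslant X\leqslant I_r$ and $V\in{\rm Mat}_{r,d}(\mathbb C)$ satisfies $V^*V=I_d$ and $V^*XV$ is an orthogonal projection, then $XV=V(V^*XV)$. This follows from $X^2\leqslant X$, which gives $V^*X^2V\leqslant V^*XV$, together with the identity $V^*X^2V=(V^*XV)^2+V^*X(I_r-VV^*)XV=V^*XV+V^*X(I_r-VV^*)XV$, forcing the nonnegative term $V^*X(I_r-VV^*)XV$ to vanish and hence $(I_r-VV^*)XV=0$. Granting this, $(ii)\Rightarrow(iii)$ is quick: $V^*V=I_d$ by unitality, and for every projection $P$ we have $0\leqslant\varphi(P)\leqslant I_r$ (positivity applied to $P$ and to $I_d-P$) and $V^*\varphi(P)V=P$, so the fact gives $\varphi(P)V=VP$, hence $\varphi(A)V=VA$ for all $A$ by linearity; completing $V$ to a unitary $U=(V\mid W)$ then makes $U^*\varphi(A)U$ block diagonal with blocks $A$ and $\psi(A):=W^*\varphi(A)W$, because $W^*\varphi(A)V=W^*VA=0$ for all $A$ and hence the $(1,2)$-block $V^*\varphi(A)W=0$ as well. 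Direction $(iii)\Rightarrow(i)$ is a levelwise conjugation by $I_s\otimes U$, turning $({\rm id}_s\otimes\varphi)(A)$ into a block-diagonal matrix with blocks $A$ and $({\rm id}_s\otimes\psi)(A)$, so its positivity forces $A\geqslant 0$, i.e.\ $A\in{\rm Psd}_{d,s}$. Finally $(ii)\Leftrightarrow(iv)$ is bookkeeping around the identity $\varphi(\cdot)V=V(\cdot)$: its columns $h_i:=Ve_i$ are orthonormal and satisfy $\varphi(E_{ij})h_s=VE_{ij}e_s=\delta_{js}h_i$, and conversely, given such $h_i$, one first checks $\langle h_i,h_j\rangle=\langle\varphi(E_{jj})h_i,h_j\rangle=0$ for $i\neq j$ (using $\varphi(E_{jj})h_i=0$ and $h_j=\varphi(E_{jj})h_j$), so that $V:=(h_1\mid\cdots\mid h_d)$ is an isometry with $\varphi(E_{ij})V=VE_{ij}$, whence $\varphi(A)V=VA$ and $V^*\varphi(A)V=A$.

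The hard part will be $(i)\Rightarrow(ii)$, because the natural tool, \Cref{thm:cont}, only produces a \emph{sum} of compressions. After identifying $\mathcal V={\rm Mat}_d(\mathbb C)$ with $\mathbb C^{d^2}$ through a real basis of ${\rm Her}_d(\mathbb C)$ (so that $\varphi$ and ${\rm id}$ become Hermitian matrix tuples and \Cref{thm:cont} applies), the containment $\mathcal S(\varphi)\subseteq{\rm Psd}_d=\mathcal S({\rm id})$ yields $V_1,\dots,V_k\in{\rm Mat}_{r,d}(\mathbb C)$ with $\sum_j V_j^*\varphi(A)V_j=A$ for all $A$; at $A=I_d$ this gives $\sum_j V_j^*V_j=I_d$. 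Stacking the $V_j$ into a genuine isometry $V\in{\rm Mat}_{kr,d}(\mathbb C)$ and applying the core fact with $X=I_k\otimes\varphi(P)$ (again $0\leqslant X\leqslant I_{kr}$, and $V^*XV=P$) gives $\varphi(P)V_j=V_jP$ for every projection $P$ and every $j$, hence $\varphi(A)V_j=V_jA$ for all $A$ and all $j$. The key observation to finish is that whenever $\varphi(A)V=VA$ and $\varphi(A)W=WA$ hold for all $A$, then $W^*\varphi(A)=AW^*$ (take adjoints), so $W^*VA=W^*\varphi(A)V=AW^*V$ for all $A$, forcing $W^*V\in\mathbb C\, I_d$. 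In particular $V_{j_0}^*V_{j_0}=\lambda_{j_0}I_d$ with $\lambda_{j_0}\geqslant 0$, and $\sum_j\lambda_j=1$ forces some $\lambda_{j_0}>0$; then $V:=\lambda_{j_0}^{-1/2}V_{j_0}$ satisfies $V^*V=I_d$ and $\varphi(A)V=VA$, which is $(ii)$. The only remaining points requiring care are making the operator-system version of \Cref{thm:cont} over ${\rm Mat}_d(\mathbb C)$ precise and checking the transpose reduction; everything else reduces to the two short linear-algebra facts above.
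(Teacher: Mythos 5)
Your proof is correct, and for the two hard implications it takes a genuinely different route from the paper's. For $(ii)\Rightarrow(iii)$ the paper extends $V$ to a unitary $U$ and then kills the off-diagonal block $\gamma$ of $U^*\varphi(\cdot)U$ by plugging in rank-one projections $vv^*$ and their complements; you instead prove a single compression lemma (if $0\leqslant X\leqslant I$, $V^*V=I_d$ and $V^*XV$ is a projection, then $XV=V(V^*XV)$, obtained by comparing $V^*X^2V$ with $(V^*XV)^2$) which immediately yields the intertwining $\varphi(A)V=VA$, after which the off-diagonal block vanishes for free. For $(i)\Rightarrow(ii)$ the paper reduces, via the irreducibility of $\det$ on $\mathrm{Her}_d(\mathbb{C})$, from the finite family $V_1,\dots,V_\ell$ produced by \Cref{thm:cont} to a single compression map $A\mapsto V_{j_0}^*\varphi(A)V_{j_0}$ and then re-invokes the classification theorem of Choi--Li--Poon for isometries of size $d$; you instead stack the $V_j$ into one isometry, apply the same compression lemma to get $\varphi(A)V_j=V_jA$ for every $j$, and observe that the intertwining relations force each $V_j^*V_j$ to be a scalar, so a single rescaled $V_{j_0}$ already works. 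Your argument is self-contained (it does not re-cite the Choi--Li--Poon theorem inside the proof, nor the irreducibility of the determinant), and it exposes the underlying mechanism — a rigidity/multiplicative-domain phenomenon for unital positive maps — while the paper's version is shorter by leaning on the external classification result. Both correctly handle the transpose alternative via the symmetry $\varphi\leftrightarrow\varphi\circ T$, and the $(ii)\Leftrightarrow(iv)$ bookkeeping and $(iii)\Rightarrow(i)$ block-diagonal observation agree with the paper. The only point you flag yourself — applying \Cref{thm:cont} over $\mathcal V=\mathrm{Mat}_d(\mathbb{C})$ rather than $\mathbb{C}^d$ — is a routine identification via a Hermitian basis and is handled the same (implicitly) in the paper.
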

\begin{proof} It is clear that ($iii$) implies ($ii$)  and ($ii$) implies ($i$). Now assume ($i$) holds, and further assume the free spectrahedron is contained in ${\rm Psd}_d$, without loss of generality. By \Cref{thm:cont} there exist finitely many $V_1,\ldots, V_\ell\in{\rm Mat}_{r,d}(\mathbb C)$ such that $$\sum_j V_j^*\varphi(A)V_j=A$$ holds for all $A\in{\rm Mat}_d(\mathbb C)$. Since $\varphi$ is positive, it is immediate that $$A\geqslant 0 \ \Leftrightarrow\ V_j^*\varphi(A)V_j\geqslant 0 \mbox{ for all } j=1,\ldots, \ell$$ holds for all $A$, i.e.\ the spectrahedron $P_d$ is the intersection of the finitely many spectrahedra defined by  $V_j^*\varphi(A)V_j\geqslant 0.$ Since the boundary of $P_d$ is defined by the vanishing of the real irreducible polynomial $\det(A)$, already one of the conditions $V_j^*\varphi(A)V_j\geqslant 0$ alone defines $P_d.$ So to the corresponding mapping $$A\mapsto V_j^*\varphi(A)V_j$$ we can apply \cite[Theorem 2.3]{chlipo} and obtain ($ii$). Now assume without loss of generality that the first case of ($ii$) holds. Extend $V$ to a unitary $U$, by adding columns. Then $$U^*\varphi(A)U=\left(\begin{array}{cc}A & \gamma(A)^* \\ \gamma(A) & \psi(A)\end{array}\right)$$ holds for all $A\in{\rm Mat}_d(\mathbb C)$. For any $v\in\mathbb C^d$ we know that $$0\leqslant U^*\varphi(vv^*)U=\left(\begin{array}{cc}vv^* & \gamma(vv^*)^* \\\gamma(vv^*) & \psi(vv^*)\end{array}\right)$$ holds, which implies that the kernel of $\gamma(vv^*)$ contains the orthogonal complement of $v$. On the other hand, if $v^*v=1$, then also $$0\leqslant U^*\varphi(I_d-vv^*)U=I_r-U^*\varphi(vv^*)U=\left(\begin{array}{cc}I_d-vv^* & -\gamma(vv^*)^* \\-\gamma(vv^*) & I_{r-d}-\psi(vv^*)\end{array}\right)$$ holds, which implies $\gamma(vv^*)v=0$. So $\gamma(vv^*)=0$, which implies $\gamma(A)=0$ for all $A\in{\rm Mat}_d(\mathbb C)$. This proves ($iii$).  If  ($iii$) holds, we set $h_i:=Ue_i$ for $i=1,\ldots, d$ and obtain ($iv$). Conversely if ($iv$) holds, we let the $h_i$ be the first $d$ columns of a matrix, and extend to a unitary $U\in{\rm Mat}_r(\mathbb C)$. This proves ($iii$).
\end{proof}

\begin{example}
The following example is from \cite{chlipo}. For $d\geqslant 2$ consider 
\begin{align*}\varphi\colon {\rm Mat}_d\mathbb C)&\to {\rm Mat}_{2d-1}(\mathbb C)\\
\left(\begin{array}{cc}a & b^t \\c & B\end{array}\right)&\mapsto\left(\begin{array}{ccc}a & \frac{1}{\sqrt{2}}b^t & \frac{1}{\sqrt{2}}c^t \\\frac{1}{\sqrt{2}}c & B & 0 \\\frac{1}{\sqrt{2}}b & 0 & B^t\end{array}\right).
\end{align*}
One checks that $$A\geqslant 0\ \Leftrightarrow\ \varphi(A)\geqslant 0$$ holds, so $\varphi$ gives rise to an operator system $\mathcal C$ on ${\rm Mat}_d(\mathbb C)$ with $P_d$ at level one. In \cite{chlipo} it was shown that $\varphi$ is not one of the easy isometries. In view of \Cref{thm:easy}, the free spectraehdron defined by $\varphi$ is not contained in ${\rm Psd}_d$ or ${\rm Psd}_d^\Gamma$.

Explicitly, we look at the level $2$ cone of $\mathcal C,$ and compare it to ${\rm Psd}_{d,2}$ and ${\rm Psd}_{d,2}^\Gamma$. We restrict to the affine subspace of ${\rm Her}_2(\mathbb C)\otimes {\rm Her}_d(\mathbb C)={\rm Her}_d({\rm Her}_2(\mathbb C))$  of matrices of the form  $$A(x,y):=
\left(\begin{array}{ccccc}\left(\begin{array}{cc}3 & 0 \\0 & 2\end{array}\right) & \left(\begin{array}{cc}0 & x \\y & 0\end{array}\right) & 0 & \cdots & 0 \\\left(\begin{array}{cc}0 & \overline y \\\overline x & 0\end{array}\right) & \left(\begin{array}{cc}1 & 0 \\0 & 1\end{array}\right) & 0 &&\\0 & 0 & 0 &   &  \vdots\\ \vdots &   &   & \ddots &   \\ 0 & &  \cdots  &   & 0\end{array}\right).$$
 Then $$A(x,y)\in\mathcal C_2 \Leftrightarrow \left(\begin{array}{cccccc}3 & 0 & 0 & \tfrac{x}{\sqrt{2}} & 0 & \tfrac{\overline y}{\sqrt{2}} \\0 & 2 & \tfrac{y}{\sqrt{2}} & 0 & \tfrac{\overline x}{\sqrt{2}} & 0 \\0 & \tfrac{\overline y}{\sqrt{2}} & 1 & 0 & 0 & 0 \\\tfrac{\overline x}{\sqrt{2}} & 0 & 0 & 1 & 0 & 0 \\0 & \tfrac{x}{\sqrt{2}} & 0 & 0 & 1 & 0 \\\tfrac{y}{\sqrt{2}} & 0 & 0 & 0 & 0 & 1\end{array}\right)\geqslant 0.$$ A direct computation for $(x,y)\in\mathbb R^2$ thus shows $$A(x,y)\in \mathcal C_2 \ \Leftrightarrow x^2+y^2\leqslant 4.$$ Similarly we compute $$A(x,y)\in {\rm Psd}_{d,2} \ \Leftrightarrow \ x^2\leqslant 3 \wedge  y^2\leqslant 2$$ and $$A(x,y)\in {\rm Psd}_{d,2}^\Gamma \ \Leftrightarrow \ x^2\leqslant 2 \wedge y^2\leqslant 3.$$ The following picture shows these affine sections of $\mathcal C_2$ (purple), ${\rm Psd}_{d,2}$ (orange), and  ${\rm Psd}_{d,2}^\Gamma$ (yellow):
 \begin{center}
 \includegraphics[scale=0.5]{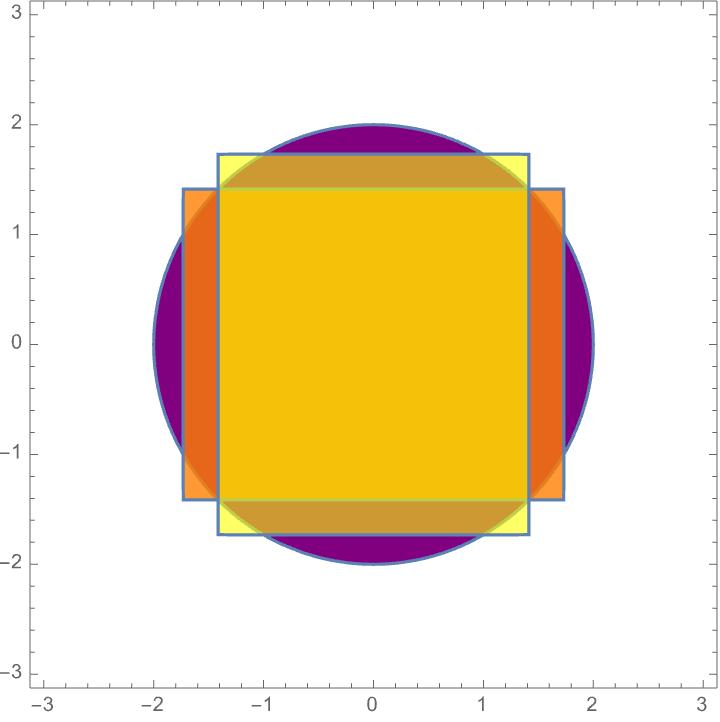}
 \end{center}
 We see that $\mathcal C$ is not  even contained in the union  of ${\rm Psd}_d$ and ${\rm Psd}_d^\Gamma$, and also does not contain ${\rm Psd}_d$ or ${\rm Psd}_d^\Gamma$.  \end{example}

\bibliographystyle{plain}
\bibliography{ref.bib}

\end{document}